\providecommand{\U}[1]{\protect\rule{.1in}{.1in}}
\newtheorem{theorem}{Theorem}
\newtheorem{lemma}[theorem]{Lemma}
\newtheorem{proposition}[theorem]{Proposition}
\newtheorem{remark}[theorem]{Remark}
\newenvironment{proof}[1][Proof]{\noindent\textbf{#1.} }{\ \rule{0.5em}{0.5em}}
\begin{document}

\title{On Riemann surfaces of genus $g$ with $4g$ automorphisms}
\author{Emilio Bujalance\\{\small Departamento de Matem\'{a}ticas Fundamentales, }\\{\small UNED, }\\{\small Paseo Senda del Rey 9, }\\{\small 28040 Madrid, Spain}
\and Antonio F. Costa\\{\small Departamento de Matem\'{a}ticas Fundamentales, }\\{\small UNED, }\\{\small Paseo Senda del Rey 9, }\\{\small 28040 Madrid, Spain}
\and Milagros Izquierdo\\{\small Matematiska institutionen}\\{\small Link{\"o}pings universitet}\\{\small 581 83 Link{\"o}ping, Sweden}\\{\small miizq@mai.liu.se}}
\date{}
\maketitle

\begin{abstract} 
We determine, for all genus $g\geq2$ the Riemann surfaces
of genus $g$ with $4g$ automorphisms. For $g\neq$ $3,6,12,15$ or $30$, this
surfaces form a real Riemann surface $\mathcal{F}_{g}$ in the moduli space
$\mathcal{M}_{g}$: the Riemann sphere with three punctures. The set of real Riemann surfaces in $\mathcal{F}_{g}$
consists of three intervals its closure in the Deligne-Mumford
compactification of $\mathcal{M}_{g}$ is a closed Jordan curve.
\end{abstract}

\textit{2000 Mathematics Subject Classification}: Primary 30F10; Secondary 14H15, 30F60.

\section{Introduction}

Given a linear expression like $ag+b$, where $a,b$ are fixed integers, it is
very difficult to claim precise information on the (compact)\ Riemann surfaces
of genus $g\geq2$ with automorphism groups of order $ag+b$: i.e. are there
Riemann surfaces in these conditions?, how many?, which are their automorphism
groups?\ For instance, there are many works about Hurwitz surfaces, i. e.
surfaces of genus $g$ with group of automorphisms of order $84g-84$ (maximal
order), but there is no a complete answer to the above questions. Surprisingly
we shall give an almost complete answer (up to a finite number of genera $g$)
to all questions on Riemann surfaces of genus $g$ with $4g$ automorphisms.

For each integer $g\geq2$ we find an equisymmetric (complex)-\-uni\-parametric
family $\mathcal{F}_{g}$ of Riemann surfaces of genus $g$ having (full)
automorphism group of order $4g$. The families $\mathcal{F}_{g}$ are the
equisymmetric and uniparametric families of Riemann surfaces whose
automorphism groups have largest order. If $g\neq3,6,15$ all surfaces with
$4g$ automorphisms are in the family $\mathcal{F}_{g}$ with one or two more
exceptional surfaces in a few genera: $g=3,6,12,30$. For genera
$g=3,6$ and $15$ it appears another exceptional uniparametric family. Finally
for genera $3,6,12$ and $30$ there are one or two exceptional surfaces with
$4g$ automorphisms.

The automorphism group of the surfaces in $\mathcal{F}_{g}$ is $D_{2g}$ and
the quotient $X/\mathrm{Aut}(X)$ is the Riemann sphere $\widehat{\mathbb{C}}$,
the meromorphic function $X\rightarrow X/\mathrm{Aut}(X)=\widehat{\mathbb{C}}$
have four singular values of orders $2,2,2,2g$.

Ravi S. Kulkarni \cite{K-1} showed that, for any genus
$g\equiv0,1,2\operatorname{mod}4$, there is a unique surface of genus $g$ with
full automorphism group of order $8(g+1)$ (the family of Accola-Maclachan
\cite{A} and \cite{M}), and for $g\equiv-1\operatorname{mod}4,$ there is just
another surface of genus $g$ (the Kulkarni surface \cite{K-1}). In \cite{K}
Kulkarni shows that, if $g\neq3$ there is a unique Riemann surface of
genus $g$ admitting an automorphism of order $4g$, while for $g=3$ there are
two such surfaces. The surfaces in this last family have exactly $8g$
automorphisms, except for $g=2,$ where the surface has $48$ automorphisms. For
cyclic groups there are some cases where the order of the group determines the
Riemann surface (see \cite{K}, \cite{Nak}, \cite{Hi}). Analogous results are
known for Klein surfaces: \cite{BC}, \cite{BT}, \cite{BTG} and \cite{B}.

The family $\mathcal{F}_{g}$ contains surfaces admitting anticonformal
automorphisms, forming the subset $\mathbb{R}\mathcal{F}_{g}$. These points in
the moduli space correspond to Riemann surfaces given by the
complexification of real algebraic curves. The extended groups of
automorphisms of the surfaces in $\mathbb{R}\mathcal{F}_{g}$ (including the
anticonformal automorphisms) are isomorphic either to $D_{2g}\times C_{2}$ or
$D_{4g}$, and such groups contain anticonformal involutions, so the surfaces
in $\mathbb{R}\mathcal{F}_{g}$ are real Riemann surfaces. The topological
types of conjugacy classes of anticonformal involutions (real forms)\ of the
real Riemann surfaces in $\mathcal{F}_{g}$ are either $\{+2,0,-2,-2\}$,
$\{-1$,$-1,-g,-g\}$, $\{0,0,-2,-2\}$ if $g$ is odd or $\{+1,0,-1,-3\}$,
$\{-1$,$-1,-g,-g\}$, $\{-2\}$ if $g$ is even.

The family $\mathcal{F}_{g}$ is the Riemann sphere with three punctures, having an
anticonformal involution whose fixed point set consists of three arcs
$a_{1},a_{2},b$. Each one of these arcs is formed by the real Riemann surfaces
in $\mathbb{R}\mathcal{F}_{g}$ with a different set of topological types of
real forms. Adding three points to the surface $\mathcal{F}_{g}$ we obtain a
compact Riemann surface $\overline{\mathcal{F}_{g}}\subset\widehat{\mathcal{M}%
}_{g}$, where $\overline{a_{1}\cup a_{2}\cup b}$ (the closure of $a_{1}\cup
a_{2}\cup b$ in $\widehat{\mathcal{M}}_{g}$) is a closed Jordan curve. The
space $\widehat{\mathcal{M}}_{g}$ is the Mumford-Deligne compactification of
$\mathcal{M}_{g}$. As a consequence we have that $\overline{\mathbb{R}%
\mathcal{F}_{g}}\cap\mathcal{M}_{g}$ has two connected components.

\textbf{Acknowledgement.} All authors partially supported by the project MTM2014-55812-P.

\section{Preliminaries}

\subsection{Non-Euclidean crystallographic groups}

A \textit{non-Euclidean crystallographic group} (or \textit{NEC group})
$\Gamma$ is a discrete group of isometries of the hyperbolic plane
$\mathbb{D}$. We shall assume that an NEC group has a compact orbit space. If
$\Gamma$ is such a group then its algebraic structure is determined by its
signature%
\begin{equation}
(h;\pm;[m_{1},\ldots,m_{r}];\{(n_{11},\ldots,n_{1s_{1}}),\ldots,(n_{k1}%
,\ldots,n_{ks_{k}})\}). \tag{1}%
\end{equation}
\label{signature}

The orbit space $\mathbb{D}/\Gamma$ is a surface, possibly with boundary. The
number $h$ is called the\textit{\ genus} of $\Gamma$ and equals the
topological genus of $\mathbb{D}/\Gamma$, while $k$ is the number of the
boundary components of $\mathbb{D}/\Gamma$, and the sign is $+$ or $-$
according to whether the surface is orientable or not. The integers $m_{i}%
\geq2$, called the \textit{proper periods}, are the branch indices over
interior points of $\mathbb{D}/\Gamma$ in the natural projection
$\pi:\mathbb{D}\rightarrow\mathbb{D}/\Gamma$. The bracketed expressions
$(n_{i1},\ldots,n_{is_{i}})$, some or all of which may be empty (with
$s_{i}=0$), are called the \textit{period cycles} and represent the branchings
over the ${i}^{\mathrm{th}}$ boundary component of the surface. Finally the
numbers $n_{ij}\geq2$ are the \textit{link periods}.

Associated with each signature there exists a \textit{canonical presentation}
for the group $\Gamma$. If the signature (\ref{signature}) has sign $+$ then
$\Gamma$ has the following generators:

\qquad$x_{1}, \ldots, x_{r}$ \ (elliptic elements),

\qquad$c_{10}, \ldots, c_{1s_{1}}, \ldots, c_{k0}, \ldots, c_{ks_{k}} $ \ (reflections),

\qquad$e_{1},\ldots,e_{k}$ \ (boundary transformations),

\qquad$a_{1}, b_{1}, \ldots, a_{g},b_{g}$ \ (hyperbolic elements);

these generators satisfy the defining relations

\qquad$x_{i}^{m_{i}} = 1$ \ (for $1 \leq i \leq r$),

\qquad$c_{ij-1}^{2}=c_{ij}^{2}= (c_{ij-1}c_{ij})^{n_{ij}}=1 , \ c_{is_{i}%
}=e_{i}^{-1}c_{i0}e_{i}$ \ (for $1 \leq i \leq k, 0 \leq j \leq s_{i} ), $

\qquad$x_{1} \ldots x_{r}e_{1} \ldots e_{k}a_{1}b_{1}a_{1}^{-1}b_{1}^{-1}
\ldots a_{h}b_{h}a_{h}^{-1}b_{h}^{-1}=1. $

If the sign is $-$ then we just replace the hyperbolic generators $a_{i}%
,b_{i}$ by glide reflections $d_{1},\ldots,d_{h}$, and the last relation by
$x_{1}\ldots x_{r}e_{1}\ldots e_{k}d_{1}^{2}\ldots d_{h}^{2}=1$.

The hyperbolic area of an arbitrary fundamental region of an NEC group
$\Gamma$ with signature (\ref{signature}) is given by%
\begin{equation}
\mu(\Gamma)=2\pi\left(  \varepsilon h-2+k+\sum_{i=1}^{r}{\left(  1-{\frac
{1}{m{_{i}}}}\right)  }+{\frac{1}{2}}\sum_{i=1}^{k}\sum_{j=1}^{s_{i}}{\left(
1-{\frac{1}{n{_{ij}}}}\right)  }\right)  \tag{2}%
\end{equation}
where $\varepsilon=2$ if the sign is $+$, and $\varepsilon=1$ if the sign is
$-$. Furthermore, any discrete group $\Lambda$ of isometries of $\mathbb{D}$
containing $\Gamma$ as a subgroup of finite index is also an NEC group, and
the hyperbolic area of a fundamental region for $\Lambda$ is given by the
Riemann-Hurwitz formula:%
\begin{equation}
\lbrack\Lambda:\Gamma]=\mu(\Gamma)/\mu(\Lambda). \tag{3}%
\end{equation}

The NEC groups with signature of the form $(h;+;[m_{1},\ldots,m_{r}];\{-\})$
are Fuchsian groups. For any NEC group $\Lambda$, let ${\Lambda}^{+}$ denote
the subgroup of orientation-preserving elements of $\Lambda$, called the
\textit{canonical Fuchsian subgroup\/} of $\Lambda$. If $\Lambda^{+}%
\neq\Lambda$ then $\Lambda^{+}$ has index $2$ in $\Lambda$ and we say that
$\Lambda$ is a \textit{proper\/} NEC group (see \cite{BEGG}).

\subsection{Riemann surfaces, automorphisms and uniformization groups}

A Riemann surface is a surface endowed with a complex analytical structure.
Let $X$ be a compact Riemann surface of genus $g>1$. Then there is a surface
Fuchsian group $\Gamma$ (that is, an NEC group with signature
$(g;+;[-];\{-\}))$) such that $X=\mathbb{D}/\Gamma$, and if $G$ is a group of
automorphisms of $X$ there is a Fuchsian group $\Delta$, containing $\Gamma$,
and an epimorphism $\theta:\Delta\rightarrow G$ such that $\ker\theta=\Gamma$.
If $G^{\ast}$ is a group of conformal and anticonformal automorphism then
there is an NEC\ group $\Lambda$, and an epimorphism $\theta^{\ast}%
:\Lambda\rightarrow G$ such that $\ker\theta^{\ast}=\Lambda$. In particular
the full automorphism group \textrm{Aut}$(X)$ of $X$ is isomorphic to
$\Delta/\Gamma$, where $\Delta$ is a Fuchsian group containing $\Gamma$. The
extended (full) automorphism group \textrm{Aut}$^{\pm}(X)$ of $X$ (including
anticonformal automorphisms) is isomorphic to $\Lambda/\Gamma$, where
$\Lambda$ is an NEC\ group such that $\Lambda^{+}=\Delta$.

\subsection{Topological types of anticonformal involutions}

Given a Riemann surface $X$ of genus $g$, the topological type of the action of
an anticonformal involution $\sigma\in\mathrm{Aut}(X)$ is determined by the
number of connected components, called \textit{ovals}, of its fixed point set
$Fix(\sigma)$ and the orientability of the\ Klein surface\ $X/\left\langle
\sigma\right\rangle $. We say that $\sigma$\ has \textit{species} $+k$ if
$Fix(\sigma)$ consists of $k$ ovals and $X/\left\langle \sigma\right\rangle $
is orientable, and $-k$ if $Fix(\sigma)$ consists of $k$ ovals and
$X/\left\langle \sigma\right\rangle $ is nonorientable (i. e. two surfaces
with symmetries of the same species have topologically conjugate quotient
orbifolds and vice versa). The set $Fix(\sigma)$ corresponds to the real part
of a complex algebraic curve representing $X$, which admits an equation with
real coefficients. The \textquotedblright$+$\textquotedblright\ sign in the
species of $\sigma$\ means that the real part disconnects its complement in
the complex curve and then we say that $\sigma$\ separates. By a classical
theorem of Harnack the possible values of species run between $-g$ and
$+(g+1)$, where $+k\equiv g+1\operatorname{mod}2$ (see \cite{CP} for a
geometrical proof).

A Riemann surface with an anticonformal involution is said to be a real
Riemann surface. The type of symmetry of a Riemann surface $X$ is the set of
topological types of anticonformal involutions of $X$.

There is a categorical equivalence between compact Riemann surfaces and
complex projective smooth algebraic curves. The conjugacy classes of
anticonformal involutions of Riemann surfaces correspond to the real forms of
the corresponding algebraic curve: i. e. real algebraic curves (see \cite{N}).
The topological type of an anticonformal involutions gives us important
information about the real points of a real algebraic curve, the number of
connected components of the real points of the algebraic curve and the
separability character of the real points inside the complex algebraic curve.

\subsection{Teichm\"{u}ller and moduli spaces}

Here we follow reference \cite{MS} on moduli spaces of Riemann and Klein surfaces.

Let $s$ be a signature of NEC groups and let $\mathcal{G}$ be an abstract
group isomorphic to the NEC groups with signature $s$. We denote by
$\mathbf{R}(s)$ the set of monomorphisms $r:\mathcal{G\rightarrow}Aut^{\pm
}(\mathbb{D})$ such that $r(\mathcal{G})$ is an NEC group with signature $s$.
The set $\mathbf{R}(s)$ has a natural topology given by the topology of
$Aut^{\pm}(\mathbb{D})$. Two elements $r_{1}$ and $r_{2}\in\mathbf{R}(s)$ are
said to be equivalent, $r_{1}\thicksim r_{2}$, if there exists $g\in Aut^{\pm
}(\mathbb{D})$ such that for each $\gamma\in\mathcal{G}$, $r_{1}%
(\gamma)=gr_{2}(\gamma)g^{-1}$. The space of classes $\mathbf{T}%
(s)=\mathbf{R}(s)/\thicksim$ is called the \textit{Teichm\"{u}ller space} of
NEC\ groups with signature $s$. If the signature $s$ is given in section
\ref{signature}, the Teichm\"{u}ller space $\mathbf{T}(s)$ is homeomorphic to
$\mathbb{R}^{d(s)}$, where
\[
d(s)=3(\varepsilon h-1+k)-3+(2r+\sum_{i=1}^{k}r_{i}).
\]

The modular group $\mathrm{Mod}(\mathcal{G})$ of $\mathcal{G}$ is the quotient
$\mathrm{Mod}(\mathcal{G})=\mathrm{Aut}(\mathcal{G})/\mathrm{Inn}%
(\mathcal{G})$, where $\mathrm{Inn}(\mathcal{G})$ denotes the inner
automorphisms of $\mathcal{G}$. The \emph{moduli space} of NEC groups with
signature $s$ is the quotient $\mathcal{M}_{s}=\mathbf{T}(s)/\mathrm{Mod}%
(\mathcal{G})$ endowed with the quotient topology. Hence $\mathcal{M}_{s}$ is
an orbifold with fundamental orbifold group $\mathrm{Mod}(\mathcal{G})$.

If $s$ is the signature of a surface group uniformizing surfaces of
topological type $t=(g,\pm,k)$, then we denote by $\mathbf{T}(s)=\mathbf{T}%
_{t}$ and $\mathcal{M}_{s}=\mathcal{M}_{t}$ the Teichm\"{u}ller and the moduli
space of Klein surfaces of topological type $t$.

Let $\mathcal{G}$ and $\mathcal{G}^{\prime}$ be abstract groups isomorphic to
NEC groups with signatures $s$ and $s^{\prime}$ respectively. Given an
inclusion mapping $\alpha:\mathcal{G}\rightarrow\mathcal{G}^{\prime}$ there is
an induced embedding $\mathbf{T}(\alpha):\mathbf{T}(s^{\prime})\rightarrow
\mathbf{T}(s)$ defined by $[r]\mapsto\lbrack r\circ\alpha]$.

If a finite group $G$ is isomorphic to a group of automorphisms of Klein
surfaces with topological type $t=(g,\pm,k)$, then the action of $G$ is
determined by an epimorphism $\theta:\mathcal{D}\rightarrow G$, where
$\mathcal{D}$ is an abstract group isomorphic to NEC groups with a given
signature $s$ and $\ker(\theta)=\mathcal{G}$ is a group isomorphic to NEC
surface groups uniformizing Klein surfaces of topological type $t$. Then there
is an inclusion $\alpha:\mathcal{G\rightarrow D}$ and an embedding
$\mathbf{T}(\alpha):\mathbf{T}(s)\rightarrow\mathbf{T}_{t}$. The continuous
map $\mathbf{T}(\alpha)$ induces a continuous map $\mathcal{M}_{s}%
\rightarrow\mathcal{M}_{t}$ and as a consequence:

\begin{proposition}
\label{Conected}\cite{MS} The set $\mathcal{B}_{t}^{G,\theta}$ of points in
$\mathcal{M}_{t}$ corresponding to surfaces having a group of automorphisms
isomorphic to $G$, with action determined by $\theta$, is a connected set.
\end{proposition}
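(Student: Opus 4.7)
The plan is to exploit the continuous embedding $\mathbf{T}(\alpha):\mathbf{T}(s)\rightarrow\mathbf{T}_{t}$ constructed just before the statement, together with the fact (recalled earlier) that $\mathbf{T}(s)$ is homeomorphic to $\mathbb{R}^{d(s)}$, hence path-connected. If the composition
\[
\mathbf{T}(s)\xrightarrow{\mathbf{T}(\alpha)}\mathbf{T}_{t}\longrightarrow\mathcal{M}_{t}
\]
can be shown to have image exactly $\mathcal{B}_{t}^{G,\theta}$, then connectedness of $\mathcal{B}_{t}^{G,\theta}$ follows immediately, since continuous images of connected spaces are connected.

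First, I would check the easy inclusion: a point $[r\circ\alpha]\in\mathbf{T}(\alpha)(\mathbf{T}(s))$ represents an NEC surface group $r(\alpha(\mathcal{G}))=r(\mathcal{G})$ that is normal in the NEC group $r(\mathcal{D})$, and the induced isomorphism $r(\mathcal{D})/r(\mathcal{G})\cong G$ gives precisely the action $\theta$ on $X=\mathbb{D}/r(\mathcal{G})$. Hence the projection of this point to $\mathcal{M}_{t}$ lies in $\mathcal{B}_{t}^{G,\theta}$.

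The main work lies in the reverse inclusion. Given $[X]\in\mathcal{B}_{t}^{G,\theta}$, by definition there exists an NEC surface group $\mathcal{G}^{\prime}$ uniformizing $X$ and contained in some NEC group $\mathcal{D}^{\prime}$ so that $\mathcal{D}^{\prime}/\mathcal{G}^{\prime}\cong G$ realizes the action $\theta$; by lifting an abstract isomorphism $\mathcal{D}\cong\mathcal{D}^{\prime}$ compatible with $\theta$, one obtains a monomorphism $r\in\mathbf{R}(s)$ whose image under $\mathbf{T}(\alpha)$ followed by the projection to $\mathcal{M}_{t}$ is $[X]$. The delicate point here is that ``action determined by $\theta$'' must be interpreted up to the natural equivalence (composition with $\mathrm{Aut}(\mathcal{D})$ on the source and inner automorphisms of $G$ on the target), and one must verify that this equivalence is precisely what the modular group $\mathrm{Mod}(\mathcal{G})$ absorbs when passing from $\mathbf{T}_{t}$ to $\mathcal{M}_{t}$.

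The hardest step is this surjectivity/matching step: it requires a careful analysis of how the choice of isomorphism $\mathcal{D}\cong\mathcal{D}^{\prime}$ (needed to define $r$) interacts with the quotient by $\mathrm{Mod}(\mathcal{G})$. Once this is settled, connectedness of $\mathcal{B}_{t}^{G,\theta}$ follows from connectedness of $\mathbf{T}(s)\cong\mathbb{R}^{d(s)}$ and continuity of $\mathbf{T}(\alpha)$ and of the moduli projection.
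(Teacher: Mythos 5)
Your proposal is correct and takes essentially the same approach the paper intends: the paper derives the proposition (citing Macbeath--Singerman) from the continuity of the map on moduli spaces induced by $\mathbf{T}(\alpha)$ together with the connectedness of $\mathbf{T}(s)\cong\mathbb{R}^{d(s)}$, which is precisely your argument of exhibiting $\mathcal{B}_{t}^{G,\theta}$ as the continuous image of a connected space. You also correctly locate the only substantive point --- that the image of $\mathbf{T}(s)\rightarrow\mathbf{T}_{t}\rightarrow\mathcal{M}_{t}$ is exactly $\mathcal{B}_{t}^{G,\theta}$ once the action of $\mathrm{Mod}(\mathcal{G})$ and of automorphisms of $\mathcal{D}$ and $G$ is taken into account --- which is the content of the cited result.
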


\subsection{Compactification of moduli spaces}

A \textit{Riemann surface with nodes} is a connected complex analytic space
$S$ if and only if (see \cite{Be}):

\begin{enumerate}
\item there are $k=k(S)\geq0$ points $p_{1},...,p_{k}\in S$ called nodes such
that every node $p_{j}$ has a neighborhood isomorphic to the analytic set
$\{z_{1}z_{2}=0:\left\Vert z_{1}\right\Vert <1,\;\left\Vert z_{2}\right\Vert
<1\}$ with $p_{j}$ corresponding to $(0,0)$.

\item the set $S\smallsetminus\{p_{1},...,p_{k}\}$ has $r\geq1$ connected
components $\Sigma_{1},...,\Sigma_{r}$ called components of $S$, each of them
is a Riemann surface of genus $g_{i}$, with $n_{i}$ punctures with
$3g_{i}-3+n_{i}\geq0$ and $n_{1}+...+n_{r}=2k$.

\item we denote $g=(g_{1}-1)+...+(g_{r}-1)+k+1$
\end{enumerate}

If $k=k(S)=0$, $S$ is called \textit{non singular} and if $k=k(S)=3g-3$, $S$
is called \textit{terminal}.

To a Riemann surface with nodes $S$ we can associate a weighted graph,
\textit{the graph of} $S$, ${\mathcal{G}}(S)=(V_{S},E_{S},w)$, where $V_{S}$
is the set of vertices, $E_{S}$ is the set of edges, and $w$ is a function on
the set $V_{S}$ with non-negative integer values. This triple is defined in
the following way:

\begin{enumerate}
\item To each component $\Sigma_{i}$ corresponds a vertex in $V_{S}$.

\item To each node joining the components $\Sigma_{i}$ and $\Sigma_{j}$
corresponds and edge in $E_{S}$ connecting the corresponding vertices.
Multiple edges between the same pair of vertices and loops are allowed in
${\mathcal{G}}(S)$.

\item The function $w:V({\mathcal{G}}(S))\rightarrow{\mathbb{Z}}_{\geq0}$
associates to any vertex of ${\mathcal{G}}(S)$ the genus $g_{i}$ of
$\Sigma_{i}$.
\end{enumerate}

Let $\mathcal{M}_{g}$ be the moduli space of Riemann surfaces of genus $g$. A
well known result of Deligne and Mumford states that the set
$\widehat{{\mathcal{M}}_{g}}$ of Riemann surfaces with nodes of genus $g$ can
be endowed with a structure of projective complex variety and contains
$\mathcal{M}_{g}$ as a dense open subvariety \cite{DM}. If $g\geq2$ then
$\widehat{{\mathcal{M}}_{g}}$ is an irreducible complex projective variety of
dimension $3g-3$.

\section{Riemann surfaces of genus $g$ with $4g$ automorphisms}

\begin{lemma}
\label{Lemma Primero} Let $X$ be a Riemann surface of genus $g$ and let
$\Gamma$ be a surface Fuchsian group of genus $g$ uniformizing $X$. If $G$ is
an automorphism group of $X$, then $G\cong\Gamma^{\prime}/\Gamma$ where
$\Gamma^{\prime}$ is a Fuchsian group. If $\left\vert G\right\vert =4g$,
$g\neq3,6,15$ and $X$ is not in a finite set of exceptional Riemann surfaces
whose genera are $3,6,12$ or $30$, then the signature of $\Gamma^{\prime}$
must be:

\begin{enumerate}
\item $(0;+;[2,4g,4g])$

\item $(0;+;[3,6,2g])$

\item $(0;+;[4,4,2g])$

\item $(0;+;[2,2,2,2g])$
\end{enumerate}
\end{lemma}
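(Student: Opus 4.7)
The plan is to reduce the statement to a Diophantine enumeration via Riemann--Hurwitz. Because $\Gamma$ is a surface Fuchsian group of genus $g$, formula (2) gives $\mu(\Gamma)=2\pi(2g-2)$, and the index formula (3) together with $[\Gamma':\Gamma]=|G|=4g$ yields
\[
\mu(\Gamma')=\frac{2\pi(2g-2)}{4g}=\pi\,\frac{g-1}{g}.
\]
Since $\Gamma'$ is Fuchsian its signature has the form $(h;+;[m_{1},\ldots,m_{r}];\{-\})$, and (2) rewrites the area identity as
\[
2h-2+\sum_{i=1}^{r}\Bigl(1-\tfrac{1}{m_{i}}\Bigr)=\tfrac{g-1}{2g}\in\bigl(\tfrac14,\tfrac12\bigr).
\]

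First I would rule out $h\geq 1$. Any signature with $h\geq 2$ has area at least $4\pi$, too large. Among $h=1$ signatures, $r=0$ gives area $0$, $r\geq 2$ gives area $\geq 2\pi$, and $r=1$ forces $m_{1}=2g/(g+1)$, not an integer for $g\geq 2$. Hence $h=0$ and the equation becomes
\[
\sum_{i=1}^{r}\frac{1}{m_{i}}=r-\tfrac{5}{2}+\tfrac{1}{2g}.
\]
Since $\tfrac12\leq 1-\tfrac{1}{m_{i}}<1$, the total on the left of the previous display lies in $[r/2,r)$, forcing $r\in\{3,4\}$.

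Finally I would solve the Diophantine equation case by case, imposing the necessary divisibility $m_{i}\mid 4g$ that follows from the surface-kernel epimorphism $\theta:\Gamma'\to G$ sending the generator $x_{i}$ to an element of order $m_{i}$. For $r=4$, ordering $m_{1}\leq\cdots\leq m_{4}$ and the inequality $2/m_{3}\geq 1/m_{3}+1/m_{4}=1/2+1/(2g)$ gives $m_{3}\leq 3$; after this one checks that either the generic quadruple $(2,2,2,2g)$ arises (case 4), or the signatures $(2,2,3,3)$, $(2,2,3,4)$, $(2,2,3,5)$ appear, and these occur only for the excluded genera $g=3,6,15$. For $r=3$ I would split on $m_{1}\in\{2,3,4\}$ (the bound $1/m_{1}\geq 1/6$ rules out larger $m_{1}$). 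The subcase $m_{1}=2$ reduces, after imposing $m_{2}\mid 4g$ with $m_{2}=2g+k$, to $2g+k\mid 2k$, forcing $k=2g$ and yielding $(2,4g,4g)$ (case 1); the subcases $m_{1}=3$ and $m_{1}=4$ similarly produce the generic families $(3,6,2g)$ and $(4,4,2g)$ (cases 2--3). The main obstacle is the last sporadic-solution bookkeeping: in each subcase one has to verify that the finitely many remaining triples compatible with $m_{i}\mid 4g$ either fail to admit an actual surface-kernel epimorphism onto a group of order $4g$, or else force $g$ into the stated exceptional set $\{3,6,12,30\}$. Once that routine but careful enumeration is carried out, the four listed signatures exhaust the possibilities.
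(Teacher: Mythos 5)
Your overall strategy is the same as the paper's: Riemann--Hurwitz reduces the problem to the Diophantine equation $\sum_{i=1}^{r}\bigl(1-\tfrac{1}{m_{i}}\bigr)=\tfrac{5}{2}-\tfrac{1}{2g}$ on the sphere with $r\in\{3,4\}$, the divisibility $m_{i}\mid 4g$ prunes the solutions, your $r=4$ and $m_{1}=2$ analyses are correct, and (like the paper, which invokes MAGMA at exactly this point) you defer the elimination of the remaining sporadic triples to a finite group-theoretic check. So the route is not different; the question is only whether your enumeration is complete.

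It is not, in one concrete place. For $r=3$ the inequality $3/m_{1}\geq \tfrac12+\tfrac{1}{2g}>\tfrac12$ gives $m_{1}\leq 5$, which is the bound the paper uses; your stated bound $1/m_{1}\geq 1/6$ only excludes $m_{1}\geq 6$, and in any case does not justify restricting to $m_{1}\in\{2,3,4\}$. The dropped case $m_{1}=5$ is not vacuous: the triples $(5,5,5)$ at $g=5$, $(5,6,6)$ at $g=15$, $(5,5,8)$ at $g=20$ and $(5,5,9)$ at $g=45$ all satisfy both the area equation and $m_{i}\mid 4g$ (and indeed $15,20,45$ occur in the paper's list of genera carrying sporadic solutions). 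These must enter the final bookkeeping; for instance $(0;+;[5,5,5])$ at $g=5$ has to be killed by the observation that in a group of order $20$ all elements of order $5$ lie in the unique (hence normal) Sylow $5$-subgroup, so three of them cannot generate the group and no surface-kernel epimorphism exists --- genus $5$ is not in the exceptional set, so the lemma would fail if this signature survived. Once $m_{1}=5$ is restored and its solutions disposed of in this way, the rest of your argument goes through and coincides with the paper's proof.
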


\begin{proof}
Let $\Gamma^{\prime}$ have signature:
\[
(g^{\prime};+;[m_{1},...,m_{r}])
\]
By Riemann-Hurwitz formula we have:%
\[
\frac{2g-2}{2g^{\prime}-2+%
{\textstyle\sum\nolimits_{i=1}^{r}}
(1-\frac{1}{m_{i}})}=4g
\]
then%
\begin{equation}
2g^{\prime}-2+%
{\textstyle\sum\nolimits_{i=1}^{r}}
(1-\frac{1}{m_{i}})=\frac{1}{2}-\frac{1}{2g} \tag{4}%
\end{equation}
where we may assume that $m_{i-1}\leq m_{i}$, $i=2,...,r$. It is important to
note that $m_{i}$ divides $4g$ ($m_{i}$ is the order of a cyclic subgroup of
$G$). Hence $g^{\prime}=0$ and $r\leq4$, and formula (4) becomes:%
\[%
{\textstyle\sum\nolimits_{i=1}^{r}}
(1-\frac{1}{m_{i}})=\frac{5}{2}-\frac{1}{2g},r\leq4
\]

For $r=4$ we have $%
{\textstyle\sum\nolimits_{i=1}^{4}}
\frac{1}{m_{i}}=\frac{3}{2}+\frac{1}{2g}$, then if $g\neq3,6,15$ we have only
a solution $m_{1}=m_{2}=m_{3}=2,m_{4}=2g$, that is case 4 (note that for
$g=3,6,15$ we have the solutions $(2,2,3,3)$, $(2,2,3,4)$ and $(2,2,3,5)$ respectively).

If $r=3$ we have
\begin{equation}
\frac{1}{m_{1}}+\frac{1}{m_{2}}+\frac{1}{m_{3}}=\frac{1}{2}+\frac{1}{2g}
\tag{5}%
\end{equation}

From the formula (5) we have that $m_{1}\leq5$. If $m_{1}=2$, using the
formula and that $m_{i}$ divides $4g$ we have a unique solution $m_{2}%
=m_{3}=4g$ (case 1).

For $m_{1}=3,4,5$ it is possible to make a case by case analysis giving for
each value a bound for $m_{2}$ and for each possible value of $m_{2}$ a finite
set of solutions if $m_{3}\neq2g$. The solutions with $m_{3}\neq2g$ correspond
to following set of values of $g$:%
\begin{align*}
&  \{3,6,9,10,12,14,15,18,20,21,24,28,30,33,36,40,42,\\
&  45,60,66,72,84,90,105,126,132,153,190,273,276,420,429,861\}
\end{align*}

Using finite group theory and the algebra symbolic package MAGMA one shows
that there exist one or two exceptional surfaces exactly for genera $g=3,6,12$
or $30$. We thank Professor Marston Conder for helping us with these
calculations with MAGMA.

For $m_{3}=2g$ there are only two infinite set of solutions:%
\[
m_{1}=3,m_{2}=6,m_{3}=2g\text{ and }m_{1}=4,m_{2}=4,m_{3}=2g
\]
that are cases 2 and 3.
\end{proof}

\begin{remark}
See \cite{K}, section 2.3, for related results.
\end{remark}

\bigskip

In the next proposition we shall eliminate the cases 1, 2 and 3 of the
preceding Lemma using group theory and the fact that the order of
$\mathrm{Aut}(X)$ is exactly $4g$.

\bigskip

\begin{proposition}
\label{Proposition123} Let $X$ be a Riemann surface of genus $g$, uniformized
by a surface Fuchsian group $\Gamma$ and with full automorphism group
$\mathrm{Aut}(X)=G$ of order $4g$. If $\Gamma^{\prime}$ is a Fuchsian group
such that $\Gamma\leq\Gamma^{\prime}$ and $X/Aut(X)=\mathbb{D}/\Gamma^{\prime
}$ then the signature of $\Gamma^{\prime}$ is different from

\begin{enumerate}
\item $(0;+;[2,4g,4g])$

\item $(0;+;[3,6,2g])$

\item $(0;+;[4,4,2g])$
\end{enumerate}
\end{proposition}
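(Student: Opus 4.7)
The plan is to dispose of each signature in turn, combining the order constraint $|G|=4g$ with the relations from the Fuchsian presentation of $\Gamma'$. In cases~1 and~2 a short group-theoretic argument gives an outright contradiction; case~3 is the delicate one and requires producing extra automorphisms of $X$ via a Fuchsian overgroup.

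For case~1 with signature $(0;+;[2,4g,4g])$, the generator $y_2=\theta(x_2)$ has order $4g=|G|$, so $G=\langle y_2\rangle\cong C_{4g}$. Hence $X$ admits a conformal automorphism of order $4g$, and by Kulkarni's theorem (recalled in the introduction) such a surface satisfies $|\mathrm{Aut}(X)|=8g$ (or $48$ for $g=2$), contradicting $|G|=4g$.

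For case~2 with signature $(0;+;[3,6,2g])$, the subgroup $H=\langle y_3\rangle$ has order $2g$ and hence index $2$ in $G$. Since $\gcd(3,2)=1$, the image of $y_1$ in $G/H\cong C_2$ is trivial, so $y_1\in H$; then $y_2=y_3^{-1}y_1^{-1}$ also lies in $H$. Thus $G\subseteq H$, contradicting $|G|=4g>2g$.

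Case~3, with signature $(0;+;[4,4,2g])$, is the main obstacle. The analogous analysis with $H=\langle y_3\rangle$ gives $y_1,y_2\notin H$ (else $G=H$), $y_1^2=y_2^2=y_3^{g}$ (the unique involution of $H$), and, by expanding $y_2^2=(y_3^{-1}y_1^{-1})^2=y_3^{g}$, that $y_1$ inverts $y_3$; this identifies $G$ with the dicyclic group $\mathrm{Dic}_g$ of order $4g$. Since this configuration is group-theoretically consistent, the contradiction must come from producing strictly more than $4g$ conformal automorphisms of $X$. To that end I would use the Singerman list of normal inclusions of Fuchsian signatures to embed $\Gamma'$ as an index-$2$ normal subgroup of a Fuchsian group $\Gamma''$ of signature $(0;+;[2,4,4g])$ with canonical generators $u,v,w$, and exhibit canonical generators $Y_1,Y_2,Y_3$ of $\Gamma'$ (essentially $v$, $uvu$, $uw^2u$) so that conjugation by $u$ swaps $Y_1,Y_2$ and sends $Y_3$ to $(Y_2Y_1)^{-1}$. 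The delicate step, on which the whole argument turns, is to check that the resulting map on $G=\mathrm{Dic}_g$ --- namely $a\mapsto a^{-1}$, $b\mapsto ba^{g-1}$, where $a=\theta(Y_3)$ and $b=\theta(Y_1)$ --- really extends to a group automorphism; this is a short verification against the dicyclic relations. Once it is in place, $\Gamma\triangleleft\Gamma''$, so $\Gamma''/\Gamma$ acts conformally on $X$ with order $8g$, contradicting $|\mathrm{Aut}(X)|=4g$.
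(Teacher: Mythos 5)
Your argument is correct and follows essentially the same route as the paper: case 2 is the paper's index-two computation in slightly different words, and in case 3 you identify $G$ as the dicyclic group of order $4g$ and extend $\theta$ to a group acting with signature $(0;+;[2,4,4g])$ via the swap automorphism --- your map $a\mapsto a^{-1}$, $b\mapsto ba^{g-1}$ is exactly the paper's $C\mapsto C^{-1}$, $A\mapsto A^{-1}C^{-1}$, and the deferred verification against the dicyclic relations does go through, so there is no gap. The only real divergence is case 1, where you invoke Kulkarni's theorem on surfaces admitting an automorphism of order $4g$ as a black box, whereas the paper constructs the order-$8g$ extension of the cyclic action explicitly (in parallel with case 3); both are valid, yours being shorter but less self-contained.
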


\begin{proof}
\textbf{Case 1.} Assume that the signature of $\Gamma^{\prime}$ is
$(0;+;[2,4g,4g])$. Then there is a natural epimorphism $\theta:\Gamma^{\prime
}\rightarrow G\cong\Gamma^{\prime}/\Gamma$. If $\Gamma^{\prime}$ has a
canonical presentation $\left\langle x_{1},x_{2},x_{3}:x_{1}^{2}=x_{2}%
^{4g}=x_{3}^{4g}=x_{1}x_{2}x_{3}=1\right\rangle $ thus $\theta(x_{2})$ and
$\theta(x_{3})$ have order $4g$, since $\Gamma$ is a surface Fuchsian group.
Then $G$ is a cyclic group generated by $\theta(x_{3})=C$. We have
$\theta(x_{1})=C^{2g},\theta(x_{2})=C^{2g-1},\theta(x_{3})=C$.

The group $\Gamma^{\prime}$ is included in a Fuchsian group $\Delta$ of
signature $(0;+;[2,4,4g])$ (see \cite{S}). Let
\[
\left\langle x_{1}^{\prime},x_{2}^{\prime},x_{3}^{\prime}:x_{1}^{\prime
2}=x_{2}^{\prime4}=x_{3}^{\prime4g}=x_{1}^{\prime}x_{2}^{\prime}x_{3}^{\prime
}=1\right\rangle
\]
be a canonical presentation of $\Delta$. We have $x_{1}=x_{2}^{\prime2}$,
$x_{2}=x_{2}^{\prime-1}x_{3}^{\prime}x_{2}^{\prime}$, $x_{3}=x_{3}^{\prime}$
and an epimorphism $\theta^{\prime}:\Delta\rightarrow G^{\prime}$, where%
\[
G^{\prime}=\left\langle B,C:B^{2}=C^{2g},C^{4g}=1,B^{-1}CB=C^{2g-1}%
\right\rangle ,
\]
$\theta^{\prime}$ is defined by $\theta^{\prime}(x_{1}^{\prime})=C^{-1}%
B^{-1},\theta^{\prime}(x_{2}^{\prime})=B,\theta^{\prime}(x_{3}^{\prime})=C$.
Now $\theta^{\prime}\mid_{\Delta}=\theta$ and then the automorphism group of
$X$ has order $>4g$.

\textbf{Case 2.} Assume that the signature of $\Gamma^{\prime}$ is
$(0;+;[3,6,2g])$. Then there is a natural epimorphism: $\theta:\Gamma^{\prime
}\rightarrow G\cong\Gamma^{\prime}/\Gamma$. If
\[
\left\langle x_{1},x_{2},x_{3}:x_{1}^{3}=x_{2}^{6}=x_{3}^{2g}=x_{1}x_{2}%
x_{3}=1\right\rangle
\]
is a canonical presentation of $\Gamma^{\prime}$ then $G$ has a presentation
with generators $\theta(x_{1})=A,\theta(x_{2})=B,\theta(x_{3})=C$ and some of
the relations are:
\[
A^{3}=B^{6}=C^{2g}=ABC=1
\]
Hence $G$ is generated by $A$ and $C$.

Since $\Gamma^{\prime}$ is a surface group the order of $C$ is $2g$, then
$\left\langle C\right\rangle $ is an index two subgroup of $G$ and
$A\notin\left\langle C\right\rangle $. Hence $A^{2}\in\left\langle
C\right\rangle $, so $A^{2}=C^{t}$, and then $A=(A^{2})^{-1}=C^{2g-t}$, in
contradiction with $A\notin\left\langle C\right\rangle $.
\end{proof}

For the Case 3 we need a Lemma:

\begin{lemma}
Let $\Delta$ be a Fuchsian group with signature $(0;+;[4,4,2g])$ and let
\[
\left\langle x_{1},x_{2},x_{3}:x_{1}^{4}=x_{2}^{4}=x_{3}^{2g}=x_{1}x_{2}%
x_{3}=1\right\rangle
\]
be a canonical presentation of $\Delta$. Let $\theta:\Delta\rightarrow
G=\left\langle A,B\right\rangle $ be an epimorphism with kernel a surface
Fuchsian group and $\theta(x_{1})=A$, $\theta(x_{2})=B$.

There is a Fuchsian group $\Delta^{\prime}$ of signature $(0;+;[2,4,4g])$ with
$\Delta\leq\Delta^{\prime}$, $[\Delta:\Delta^{\prime}]=2,$ a group $G^{\prime
}$ with $G\leq G^{\prime}$, $[G:G^{\prime}]=2$, and an epimorphism
$\theta^{\prime}:\Delta^{\prime}\rightarrow G^{\prime}$, such that
$\theta^{\prime}\mid_{\Delta}=\theta$ if and only if the group $G$ admits an
automorphism $\alpha$ such that $\alpha(A)=B$, $\alpha(B)=A$.
\end{lemma}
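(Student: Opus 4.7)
The plan is to prove both implications through the Singerman-style inclusion of triangle groups on one side and an explicit semidirect product construction on the other.

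\textbf{Setup.} A $(4,4,2g)$-triangle group $\Delta$ always embeds as a normal index-$2$ subgroup of a $(2,4,4g)$-triangle group $\Delta^{\prime}$ (Singerman's list of maximal Fuchsian signatures, with the index confirmed by Riemann--Hurwitz). Fix a canonical presentation $\langle y_1,y_2,y_3\mid y_1^{2}=y_2^{4}=y_3^{4g}=y_1y_2y_3=1\rangle$ of $\Delta^{\prime}$. The sign character $\phi:\Delta^{\prime}\to C_{2}$ with $\ker\phi=\Delta$ must send $y_2$ to the identity and $y_1,y_3$ to the non-trivial element, so that the order-$4$ cone point lifts to the two order-$4$ points of $\Delta$, while the order-$2$ and order-$4g$ points ramify onto the single order-$2g$ point. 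A Reidemeister--Schreier computation with transversal $\{1,y_1\}$, combined with a suitable choice of canonical triple in $\Delta$, gives
\[
x_1=y_2,\qquad x_2=y_1 y_2 y_1^{-1},\qquad x_3=(x_1 x_2)^{-1},
\]
so that conjugation by $y_1$ realises the swap $x_1\leftrightarrow x_2$.

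\textbf{Necessity.} Suppose the extension $\theta^{\prime}:\Delta^{\prime}\to G^{\prime}$ exists. Then $\theta^{\prime}(y_1)\notin G$, for otherwise $G^{\prime}=\theta^{\prime}(\Delta^{\prime})=\theta(\Delta)=G$; and $G\triangleleft G^{\prime}$ because $\Delta\triangleleft\Delta^{\prime}$. Hence conjugation by $\theta^{\prime}(y_1)$ is an automorphism $\alpha$ of $G$; applying $\theta^{\prime}$ to the identities $y_1 x_1 y_1^{-1}=x_2$ and $y_1 x_2 y_1^{-1}=x_1$ gives $\alpha(A)=B$ and $\alpha(B)=A$.

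\textbf{Sufficiency.} Given $\alpha\in\mathrm{Aut}(G)$ with $\alpha(A)=B$, $\alpha(B)=A$, the square $\alpha^{2}$ fixes the generators $A,B$, so $\alpha$ is an involution. Form $G^{\prime}=G\rtimes\langle\tau\rangle$ of order $8g$, with $\tau^{2}=1$ acting on $G$ by $\alpha$, and define $\theta^{\prime}:\Delta^{\prime}\to G^{\prime}$ by
\[
\theta^{\prime}(y_1)=\tau,\qquad \theta^{\prime}(y_2)=A,\qquad \theta^{\prime}(y_3)=A^{-1}\tau.
\]
The relations $y_1^{2}=1$, $y_2^{4}=1$, and $y_1 y_2 y_3=1$ are immediate, and $\theta^{\prime}\mid_{\Delta}=\theta$ because $\theta^{\prime}(x_2)=\tau A\tau^{-1}=\alpha(A)=B$. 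It remains to verify that $A^{-1}\tau$ has order exactly $4g$: one computes
\[
(A^{-1}\tau)^{2}=A^{-1}\alpha(A^{-1})=A^{-1}B^{-1}=(BA)^{-1},
\]
and $BA$ is conjugate to $AB=\theta(x_1 x_2)=\theta(x_3^{-1})$, which has order exactly $2g$ because $\ker\theta$ is torsion-free. Since $A^{-1}\tau$ lies in the non-trivial coset of $G$ while its square lies in $G$, its order is $2\cdot 2g=4g$. Consequently $\ker\theta^{\prime}$ meets no elliptic element of $\Delta^{\prime}$ non-trivially, so $\ker\theta^{\prime}=\ker\theta=\Gamma$, a surface Fuchsian group, which completes the construction. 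The only delicate point is the compatibility between the algebraic data $(\alpha,A,B)$ and the geometric pair $(\Delta,\Delta^{\prime})$: one must arrange canonical generators so that conjugation by $y_1$ \emph{literally} swaps $x_1$ and $x_2$, and this is precisely what the Reidemeister--Schreier step of the setup delivers.
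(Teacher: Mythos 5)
Your proof is correct and follows essentially the same route as the paper: the index-two Singerman inclusion of the $(4,4,2g)$-group in a $(2,4,4g)$-group with the canonical generators embedded so that conjugation by the order-two generator swaps $x_1$ and $x_2$, conjugation by $\theta^{\prime}(y_1)$ for necessity, and the semidirect product $G\rtimes_{\alpha}C_{2}$ with the same generator assignment for sufficiency. The only difference is that you explicitly verify that $\theta^{\prime}(y_3)$ has order $4g$ and that $\ker\theta^{\prime}$ is a surface group, points the paper leaves implicit.
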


\begin{proof}
If $G$ admits such an automorphism $\alpha$, then we can construct the
semidirect product $G^{\prime}=G\rtimes_{\alpha}C_{2}$, which is generated by
$G=\langle A,B\rangle$ and an order two element $D$, conjugation by which
induces the automorphism $\alpha$ on $G$. The group $\Delta$ is contained in
an NEC group $\Delta^{\prime}$ with signature $(0;+;[2,4,4g])$ and having
canonical generators $x_{1}^{\prime},x_{2}^{\prime},x_{3}^{\prime}$. Define an
epimorphism $\theta^{\prime}:\Delta^{\prime}\rightarrow G^{\prime}%
=G\rtimes_{\alpha}C_{2}$ by setting
\[
\theta^{\prime}(x_{1}^{\prime})=D,\quad\theta^{\prime}(x_{2}^{\prime}%
)=B,\quad\theta^{\prime}(x_{3}^{\prime})=DA^{-1}.
\]
Note that $G^{\prime}$ is isomorphic to $C_{4g}\rtimes C_{2}=\left\langle
DA^{-1}\right\rangle \rtimes\left\langle D\right\rangle $ and to
$C_{4g}\rtimes C_{4}=\left\langle DA^{-1}\right\rangle \rtimes\left\langle
B\right\rangle $.

Conversely, if such an extension $\theta^{\prime}:\Delta^{\prime}\rightarrow
G^{\prime}$ of $\theta$ exists and $\Delta^{\prime}$ has canonical generators
$x_{1}^{\prime}$,$x_{2}^{\prime},x_{3}^{\prime}$, then the embedding of
$\Delta$ in $\Delta^{\prime}$ is given by
\[
x_{1}\mapsto x_{1}^{\prime}x_{2}^{\prime}x_{1}^{\prime},\quad x_{2}\mapsto
x_{2}^{\prime},\quad x_{3}\mapsto x_{3}^{\prime2};
\]
hence if $D$ is the order two element $\theta^{\prime}(x_{1}^{\prime})$, then
\[
DAD=\theta^{\prime}(x_{1}^{\prime}x_{1}x_{1}^{\prime})=\theta^{\prime}%
(x_{2}^{\prime})=\theta(x_{2})=B
\]
and
\[
DBD=\theta^{\prime}(x_{1}^{\prime}x_{2}x_{1}^{\prime})=\theta^{\prime}%
(x_{1}^{\prime}x_{2}^{\prime}x_{1}^{\prime})=\theta(x_{1})=A,
\]
so conjugation by $D$ gives the required automorphism.\bigskip
\end{proof}

Now we continue the proof of the Proposition:

\begin{proof}
\textbf{Case 3}. Assume that the signature of $\Gamma^{\prime}$ is
$(0;+;[4,4,2g])$. Then there is a natural epimorphism $\theta:\Gamma^{\prime
}\rightarrow G\cong\Gamma^{\prime}/\Gamma$. If
\[
\left\langle x_{1},x_{2},x_{3}:x_{1}^{4}=x_{2}^{4}=x_{3}^{2g}=x_{1}x_{2}%
x_{3}=1\right\rangle
\]
is a canonical presentation of $\Gamma^{\prime}$ then $G$ has a presentation
with generators $\theta(x_{1})=A,\theta(x_{2})=B,\theta(x_{3})=C$ and some of
the relations are $A^{4}=B^{4}=C^{2g}=ABC=1$. Hence $G$ is generated by $A$
and $C$.

Since the order of $\left\langle C\right\rangle $ is $2g$ then $A^{2}%
\in\left\langle C\right\rangle $ and since $\left\langle C\right\rangle
\vartriangleleft G$ then $ACA^{-1}=C^{t}$. As $\Gamma$ is a surface Fuchsian
group, $A^{2}$ has order two and $A^{2}=C^{g}$. We have that $A^{-1}C^{-1}$
has order four, then:%
\[
(A^{-1}C^{-1})^{4}=1,ACA^{-1}=C^{t},A^{2}=C^{g}%
\]
From the above relations we have that $2(t+1)\equiv0\operatorname{mod}2g$,
then either $t=g-1$ or $t=2g-1$.

If $t=g-1$, then $A^{-1}C^{-1}$ has order two but as $\Gamma$ is a surface
Fuchsian group, then $A^{-1}C^{-1}$ must have order four, so this case is not possible.

If $t=2g-1$ we have the relation $ACA^{-1}=C^{-1}$. The group $G$ has
presentation:%
\[
\left\langle A,C:A^{4}=C^{2g}=1;ACA^{-1}=C^{-1};A^{2}=C^{g}\right\rangle
\]
The assignation $A\rightarrow A^{-1}C^{-1}$ and $C\rightarrow C^{-1}$ defines
an automorphism such that $A\rightarrow A^{-1}C^{-1}=B$ and $B=A^{-1}%
C^{-1}\rightarrow A$. By the preceding Lemma the automorphism group contains
properly $G$ and then $\left\vert Aut(X)\right\vert >4$.
\end{proof}

\begin{remark}
For all $g\geq2$, there is a Riemann surface $X_{8g}=\,\mathbb{D}/\Gamma$, the
Wiman curve of type II: $w^{2}=z(z^{2g}-1)$ (see \cite{W}), with $8g$
automorphisms (except for $g=2$) and such that $X_{8g}/\mathrm{Aut}(X_{8g})$
is uniformized by a group of signature $(0;+;[2,4,4g])$ containing $\Gamma$.
The groups $G^{\prime}$ in cases 1 and 3 are isomorphic to $\mathrm{Aut}%
(X_{8g})$. The full automorphism group of Wiman's curve of genus $2$ is
$GL(2.3)$, of order $48$.
\end{remark}

\begin{theorem}
\label{Theorem 1}Let $X$ be a Riemann surface of genus $g$ uniformized by a
surface Fuchsian group $\Gamma$ and with (full) automorphism group $G$ of
order $4g$. Assume that $g\neq3,6,15$ and $X$ is not in the finite set of
exceptional Riemann surfaces in Lemma \ref{Lemma Primero}. If $\Gamma^{\prime
}$ is a Fuchsian group such that $\Gamma\leq\Gamma^{\prime}$ and
$X/Aut(X)=\mathbb{D}/\Gamma^{\prime}$ then the signature of $\Gamma^{\prime}$
is $(0;+;[2,2,2,2g])$ and $G\cong D_{2g}$ (the dihedral group of $4g$ elements).
\end{theorem}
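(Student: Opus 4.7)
The plan is to combine Lemma~\ref{Lemma Primero} and Proposition~\ref{Proposition123} to dispose of the signature question in one line, and then concentrate effort on the group-theoretic identification $G\cong D_{2g}$. The Lemma restricts the signature of $\Gamma'$ to four possibilities, and the Proposition eliminates three of them, so the only surviving signature is $(0;+;[2,2,2,2g])$.

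With this signature in hand, I would introduce canonical generators $x_{1},x_{2},x_{3},x_{4}$ of $\Gamma'$ and set $A_{i}=\theta(x_{i})$. Because $\ker\theta=\Gamma$ is a surface group, the orders of $A_{1},A_{2},A_{3},A_{4}$ in $G$ are exactly $2,2,2,2g$, and $A_{1}A_{2}A_{3}A_{4}=1$. The cyclic subgroup $H=\langle A_{4}\rangle$ has order $2g$, hence index $2$ in $G$, and is therefore normal. Projecting the long relation to $G/H\cong C_{2}$ shows that the number of $A_{1},A_{2},A_{3}$ lying outside $H$ is even; it cannot be $0$ (otherwise $G=H$ has order $2g$, contradicting $|G|=4g$), so it equals $2$. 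After relabelling I take $A_{1},A_{2}\notin H$, which forces $A_{3}$ to be the unique involution of the cyclic group $H$, namely $A_{3}=A_{4}^{g}$.

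Normality of $H$ then gives $A_{1}A_{4}A_{1}^{-1}=A_{4}^{t}$ for some integer $t$, and $A_{1}^{2}=1$ forces $t^{2}\equiv 1\pmod{2g}$. Substituting $A_{3}=A_{4}^{g}$ into $A_{1}A_{2}A_{3}A_{4}=1$ yields $A_{2}=A_{1}A_{4}^{-(g+1)}$; a short conjugation calculation then shows $A_{2}^{2}=A_{4}^{-(t+1)(g+1)}$, so the condition $A_{2}^{2}=1$ is equivalent to $(t+1)(g+1)\equiv 0\pmod{2g}$. Once I establish $t\equiv -1\pmod{2g}$, the group $G$ admits the presentation $\langle A_{1},A_{4}\mid A_{1}^{2}=A_{4}^{2g}=1,\ A_{1}A_{4}A_{1}=A_{4}^{-1}\rangle$, which identifies it with $D_{2g}$.

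The step I expect to be the main obstacle is the arithmetic deduction $t\equiv -1\pmod{2g}$ from the two congruences $t^{2}\equiv 1\pmod{2g}$ and $(t+1)(g+1)\equiv 0\pmod{2g}$. When $g$ is even one has $\gcd(g+1,2g)=1$, so the second congruence alone forces $t\equiv -1\pmod{2g}$. When $g$ is odd one has instead $\gcd(g+1,2g)=2$, so the second congruence only yields $t\equiv -1\pmod{g}$, leaving a priori the two candidates $t\equiv -1\pmod{2g}$ and $t\equiv g-1\pmod{2g}$. The candidate $t=g-1$ is then ruled out by $t^{2}\equiv 1\pmod{2g}$: using $g^{2}\equiv g\pmod{2g}$ for odd $g$, one computes $(g-1)^{2}\equiv 1-g\pmod{2g}$, which is never congruent to $1$ modulo $2g$ when $g\geq 2$. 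Hence $t\equiv -1\pmod{2g}$ in all cases, completing the identification $G\cong D_{2g}$.
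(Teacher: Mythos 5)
Your proposal is correct and follows essentially the same route as the paper: Lemma \ref{Lemma Primero} plus Proposition \ref{Proposition123} pin down the signature, the normal cyclic subgroup $\langle\theta(x_4)\rangle$ of index two forces exactly two of the other canonical images outside it and the third to be its unique involution, and the product relation together with $t^2\equiv1\pmod{2g}$ yields $t\equiv-1$. The only difference is cosmetic --- you substitute $A_3=A_4^{g}$ directly rather than parametrizing the involutions as $D^{r}A$ --- and your parity analysis ruling out $t\equiv g-1$ for odd $g$ is in fact more explicit than the paper's, which simply asserts $t=-1$ at that point.
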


\begin{proof}
Let $X$ be a Riemann surface of genus $g$, uniformized by a surface Fuchsian
group $\Gamma$ and with automorphism group $G$ of order $4g$. If
$\Gamma^{\prime}$ is a Fuchsian group with $\Gamma\leq\Gamma^{\prime}$ and
$X/Aut(X)=\mathbb{D}/\Gamma^{\prime}$ then, by Lemma \ref{Lemma Primero} and
Proposition \ref{Proposition123} the signature of $\Gamma^{\prime}$ is
$(0;+;[2,2,2,2g])$.

There is a canonical presentation of $\Gamma^{\prime}$:%
\[
\left\langle x_{1},x_{2},x_{3},x_{4}:x_{i}^{2}=x_{4}^{2g}=x_{1}x_{2}x_{3}%
x_{4}=1,i=1,2,3\right\rangle
\]
and an epimorphism:%
\[
\theta:\Gamma^{\prime}\rightarrow G\cong\Gamma^{\prime}/\Gamma
\]
If $\theta(x_{4})=D$, we have that the order of $D$ is $2g$. Some of the
$\theta(x_{i})$, $i=1,2,3$, does not belong to $\left\langle D\right\rangle $,
using, if necessary, an automorphism of $\Gamma^{\prime}$ we may suppose that
is $\theta(x_{1})=A\notin\left\langle D\right\rangle $.\ Then $A^{2}=1$ and
since $\left\langle D\right\rangle \vartriangleleft G$, $ADA^{-1}=D^{t}$, with
$t^{2}\equiv1\operatorname{mod}2g$.

The elements $\theta(x_{2})$ and $\theta(x_{3})$ have order $2$ and all order
two elements in $G=\left\langle A,D\right\rangle $ are $A$, $D^{g}$ and
$D^{r}A$ with $r(t+1)\equiv0\operatorname{mod}2g$. Since $x_{1}x_{2}x_{3}%
x_{4}=1$, $\theta(x_{2}x_{3})=A^{-1}D^{-1}$, therefore either $\theta
(x_{2})=D^{g}$ and $\theta(x_{3})=D^{r}A$ or $\theta(x_{2})=D^{r}A$ and
$\theta(x_{3})=D^{g}$. Using if necessary an automorphism of $\Gamma^{\prime}$
we may assume $\theta(x_{2})=D^{r}A$ and $\theta(x_{3})=D^{g}$. Finally using
$\theta(x_{2}x_{3})=A^{-1}D^{-1}$ we obtain $D^{r}AD^{g}=A^{-1}D^{-1}$ from
where we have $rt+g+1\equiv0\operatorname{mod}(2g)$. As $r(t+1)\equiv
0\operatorname{mod}2g$, we have $g+1-r\equiv0\operatorname{mod}(2g)$ and
$r\equiv g+1\operatorname{mod}2g$, then $r=g+1$ and $t=-1$. Hence $ADA=D^{-1}%
$, $A^{2}=D^{2g}=1$, the group $G$ is $D_{2g}$ and the epimorphism is unique
(up to automorphisms of $\Gamma^{\prime}$ and $G$):%
\[
\theta(x_{1})=A,\theta(x_{2})=D^{g+1}A,\theta(x_{3})=D^{g},\theta(x_{4})=D
\]

\end{proof}

\begin{remark}
Note that the epimorphism $\theta:\Gamma^{\prime}\rightarrow G\cong%
\Gamma^{\prime}/\Gamma$ of the Theorem \ref{Theorem 1} is unique up to
automorphisms of $\Gamma^{\prime}$ and $G$. So the surfaces of genus $g$
having automorphism group of order $4g$ with $g\geq31$ or in the conditions of
the Theorem, form a connected equisymmetric uniparametric family.
\end{remark}

\begin{remark}
The surfaces in the above theorem are hyperelliptic. The hyperelliptic
involution corresponds to the element $D^{g}$ of $D_{2g}$, since $\theta
^{-1}(D)$ has signature $(0;+;[2,\overset{2g+2}{...},2])$ (see \cite{BEGG}).
\end{remark}

\section{Conformal and anticonformal automorphism groups}

In this section we shall obtain the groups of conformal and anticonformal
automorphisms of Riemann surfaces with automorphism group of order $4g$.

\begin{theorem}
\label{Signaturas NEC}Let $X$ be a Riemann surface of genus $g$, uniformized
by a surface Fuchsian group $\Gamma$ and with automorphism group $G$ of order
$4g$. If $\Gamma^{\prime}$ is a Fuchsian group with $\Gamma\leq\Gamma^{\prime
}$ and $X/Aut(X)=\mathbb{D}/\Gamma^{\prime}$, we assume that the signature of
$\Gamma^{\prime}$ is $(0;+;[2,2,2,2g])$. Let $Aut^{\pm}(X)=G^{\ast}$ be the
group of conformal and anticonformal automorphisms of $X$ and $\Gamma^{\ast}$
be an NEC group such that $G^{\ast}\cong\Gamma^{\ast}/\Gamma$. If $G^{\ast
}\gneq G$ then the signature of $\Gamma^{\ast}$ is

\begin{enumerate}
\item[a.] $(0;+;[-];(2,2,2,2g))$ then $G^{\ast}\cong D_{2g}\times C_{2}$ and
there are two epimorphisms $\Gamma^{\ast}\rightarrow D_{2g}\times C_{2}$ (up
to automorphisms of $\Gamma^{\ast}$).

\item[b.] $(0;+;[2];(2,2g))$ then $G^{\ast}$ has presentation:%
\begin{align*}
&  \left\langle x,z,w:x^{2}=z^{2}=w^{2}=(zw)^{2g}=1,xzx=(zw)^{g-1}%
z,xwx=(zw)^{g}z\right\rangle \\
&  =D_{2g}\rtimes_{\varphi}C_{2}%
\end{align*}
where $\varphi(z)=(zw)^{g-1}z$, $\varphi(w)=(zw)^{g}z$. Then $G^{\ast}\cong
D_{4g}$ if $g$ is even and $G^{\ast}\cong D_{2g}\times C_{2}$ if $g$ is odd.
\end{enumerate}
\end{theorem}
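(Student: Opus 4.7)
My plan begins by exploiting the fact that conformal automorphisms always form a subgroup of index at most two in the extended automorphism group, so $G\lneq G^{\ast}$ forces $[G^{\ast}:G]=2$ and hence $[\Gamma^{\ast}:\Gamma^{\prime}]=2$ with $\Gamma^{\prime}=(\Gamma^{\ast})^{+}$. The Riemann--Hurwitz formula then gives
\[
\mu(\Gamma^{\ast})=\tfrac{1}{2}\mu(\Gamma^{\prime})=2\pi\bigl(\tfrac{1}{4}-\tfrac{1}{4g}\bigr).
\]
The first step is to enumerate proper NEC signatures $s^{\ast}$ of this hyperbolic area whose canonical Fuchsian subgroup has signature $(0;+;[2,2,2,2g])$. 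A short case analysis on the area identity $\epsilon h'-2+k+\sum(1-1/m_{i})+\tfrac12\sum(1-1/n_{ij})=\tfrac14-\tfrac1{4g}$ forces $\epsilon h'=0$ and $k=1$, and leaves only a very short list of possibilities for the proper periods and the single period cycle. Combined with the divisibility constraint that every period divide the order of an element of $G^{\ast}$ (so in particular divide $8g$), only the two signatures listed survive; the remaining arithmetic possibilities are killed either because the required exceptional $g$ was already excluded by Lemma~\ref{Lemma Primero} and Proposition~\ref{Proposition123}, or because a period would have to be an element order incompatible with $|G^{\ast}|=8g$.

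For case~(a), I would write the canonical reflection generators $c_{0},c_{1},c_{2},c_{3}$ of $\Gamma^{\ast}$ (with $c_{4}=c_{0}$ since the unique boundary transformation is trivial), identify $x_{i}=c_{i-1}c_{i}$ with the canonical elliptics of $\Gamma^{\prime}$, and set $\theta^{\ast}(c_{i})=C_{i}=(a_{i},t)\in D_{2g}\times C_{2}$. Solving the four product conditions
\[
a_{0}a_{1}=A,\quad a_{1}a_{2}=D^{g+1}A,\quad a_{2}a_{3}=D^{g},\quad a_{3}a_{0}=D
\]
with each $a_{i}$ an involution (or trivial) of $D_{2g}$ yields exactly two solution classes modulo $\mathrm{Aut}(\Gamma^{\ast})$, namely $(A,1,D^{g+1}A,DA)$ and $(D^{g}A,D^{g},DA,D^{g+1}A)$, both realising $G^{\ast}\cong D_{2g}\times C_{2}$.

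For case~(b), the canonical presentation of $\Gamma^{\ast}$ has an elliptic $x$ of order $2$, reflections $c_{0},c_{1},c_{2}=xc_{0}x$, and boundary transformation $e=x$. Writing $X=\theta^{\ast}(x)$, $z=\theta^{\ast}(c_{0})$, $w=\theta^{\ast}(c_{1})$, the relations of $\Gamma^{\ast}$ together with $\theta^{\ast}|_{\Gamma^{\prime}}=\theta$ pin down the images of $c_{0}c_{1}$, $c_{1}c_{2}$, $c_{0}c_{2}=(c_{0}x)^{2}$ and $x$ as the canonical elliptics of $\Gamma^{\prime}$. A direct translation of these data into relations for $X,z,w$ yields the presentation of $G^{\ast}$ displayed in the theorem, exhibiting it as the semidirect product $D_{2g}\rtimes_{\varphi}C_{2}$ with $\varphi(z)=(zw)^{g-1}z$ and $\varphi(w)=(zw)^{g}z$.

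The main obstacle is the parity-dependent identification of the isomorphism type in case~(b). I would handle it by analysing the anticonformal element $r=Xz$: from $XzX=(zw)^{g-1}z$ one computes $r^{2}=(zw)^{g-1}$, and $\gcd(2g,g-1)$ equals $1$ or $2$ according as $g$ is even or odd. When $g$ is even, $r^{2}$ generates the cyclic subgroup of order $2g$, so $r$ has order $4g$ and $zrz=r^{-1}$; hence $G^{\ast}=\langle r,z\rangle\cong D_{4g}$. When $g$ is odd, I would exhibit the central involution $y=(zw)^{(g-1)/2}zX$, checking directly that $y^{2}=1$ and that $y$ commutes with each of $z$, $w$, $X$. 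Since $y$ is anticonformal it is not in $D_{2g}$, so $\langle y\rangle$ and $D_{2g}$ intersect trivially and together have order $8g=|G^{\ast}|$, giving $G^{\ast}=D_{2g}\times\langle y\rangle\cong D_{2g}\times C_{2}$. The careful verification of centrality of $y$ (and of its non-existence when $g$ is even) is the most delicate point of the argument.
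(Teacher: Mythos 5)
Your overall route coincides with the paper's: the same two signatures are obtained (the paper, like you, essentially reads them off from the requirement that the canonical Fuchsian subgroup of $\Gamma^{\ast}$ be $\Gamma^{\prime}$ with signature $(0;+;[2,2,2,2g])$), and case (b) is handled exactly as in the paper --- the relations force $xzx=(zw)^{g-1}z$ and $xwx=(zw)^{g}z$, and the parity dichotomy follows from the order of $(xz)^{2}=(zw)^{g-1}$. Your explicit central involution $(zw)^{(g-1)/2}zx$ for odd $g$ is a correct variant of the paper's $(xz)^{g}$.

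The genuine gap is in case (a). You set $\theta^{\ast}(c_{i})=(a_{i},t)\in D_{2g}\times C_{2}$ and enumerate solutions inside the direct product, but the isomorphism $G^{\ast}\cong D_{2g}\times C_{2}$ is precisely the substantive part of the claim and cannot be assumed. A priori $G^{\ast}$ is only known to be a group of order $8g$ containing the conformal $D_{2g}$ as a normal subgroup of index two; since the images of the reflections are involutions outside that subgroup, $G^{\ast}\cong D_{2g}\rtimes_{\psi}C_{2}$ for some involutory $\psi$, and one must compute $\psi$ from the relations --- exactly as you do in case (b) --- rather than take $\psi=\mathrm{id}$. The paper closes this point by observing that $x,y,z,w,xy,yz,zw$ are involutions while $wx$ has order $2g$, so $\langle x,w\rangle$ is a normal dihedral subgroup of order $4g$, and then the case analysis on where $z$ lies forces $(xy)^{2}=(yz)^{2}=(yw)^{2}=1$, i.e. $y$ is central and $G^{\ast}=\langle x,w\rangle\times\langle y\rangle$. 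Your two solution tuples do correspond to the paper's two epimorphisms, but without first ruling out a nontrivial twist (e.g. $G^{\ast}\cong D_{4g}$, or the group $C_{4g}\rtimes C_{2}$ that occurs for the Wiman curve) the enumeration is circular.
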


\begin{proof}
Since the signature of $\Gamma^{\prime}$ is $(0;+;[2,2,2,2g])$ and
$\Gamma^{\prime}$ is an index two subgroup of the NEC group $\Gamma^{\ast}$,
the signature of $\Gamma^{\ast}$ must be either:

a. $(0;+;[-];(2,2,2,2g))$ or

b. $(0;+;[2];(2,2g)).$

Case a. $\Gamma^{\ast}$ has signature $(0;+;[-];(2,2,2,2g))$. Let
\[
\left\langle c_{0},c_{1},c_{2},c_{3}:c_{i}^{2}=(c_{0}c_{1})^{2}=(c_{1}%
c_{2})^{2}=(c_{2}c_{3})^{2}=(c_{3}c_{0})^{2g}=1\right\rangle
\]
be a canonical presentation of $\Gamma^{\ast}$. Assume that the epimorphism
$\theta^{\ast}:\Gamma^{\ast}\rightarrow G^{\ast}\cong\Gamma^{\ast}/\Gamma$, is
given by%
\[
\theta^{\ast}(c_{0})=x,\theta^{\ast}(c_{1})=y,\theta^{\ast}(c_{2}%
)=z,\theta^{\ast}(c_{3})=w,
\]
then we have that $\{x,y,z,w\}$ is a set of generators of $G^{\ast}$ and,
since $\Gamma$ is a surface Fuchsian group, $x,y,z,w,xy,yz,zw\,$have order 2
and $wx$ has order $2g$. Since $\left\vert G^{\ast}\right\vert =8g$ and
$\left\langle x,w\right\rangle $ has order $4g$ then $\left\langle
x,w\right\rangle \vartriangleleft G^{\ast}$. Note that either $y$ or $z$ is
not in $\left\langle x,w\right\rangle $, assume that $y\notin\left\langle
x,w\right\rangle $ (the argument assuming $z\notin\left\langle
x,w\right\rangle $ is annalogous). Hence $y(wx)y=(wx)^{t}$, with $(t,2g)=1$.

The elements $y$ and $z$ are not the same, because $yz$ has order 2. Since
$G^{\ast}=G^{\ast}\cup yG^{\ast}$ we have two possibilities either $z\in
G^{\ast}$ or $z\in yG^{\ast}$.

Case 1. $z\in G^{\ast}$, then either $z=(wx)^{g}$ or $z=(wx)^{g}w$.

Case 1a. The equality $z=(wx)^{g}$ is not possible, since $(wx)^{g}$ is an
orientation preserving element.

Case 1b. If $z=(wx)^{g}w$, since $(yz)^{2}=1$ we have $y(wx)^{g}%
wy(wx)^{g}w=(wx)^{gt(t+1)}ywyw=(yw)^{2}=1$. Then $(xy)^{2}=(yz)^{2}=(yw)^{2}$,
and $G^{\ast}=D_{2g}\times C_{2}=\left\langle x,w\right\rangle \times
\left\langle y\right\rangle $. The epimorphism $\theta^{\ast}:\Gamma^{\ast
}\rightarrow G^{\ast}\cong\Gamma^{\ast}/\Gamma$, completely determined up to
automorphisms of $\Gamma^{\ast}$ or $G^{\ast}$, is:%
\[
\theta^{\ast}(c_{0})=x,\theta^{\ast}(c_{1})=y,\theta^{\ast}(c_{2}%
)=(wx)^{g}w,\theta^{\ast}(c_{3})=w
\]

Note that $\Gamma^{\prime}$ is the canonical Fuchsian subgroup of
$\Gamma^{\ast}$. We shall see that the epimorphism $\theta^{\ast}$ restricted
to $\Gamma^{\prime}$ is equivalent by automorphisms of $\Gamma^{\prime}$ and
$D_{2g}$ to the epimorphism constructed in the proof of Theorem
\ref{Theorem 1}. A set of generators of a canonical presentation of
$\Gamma^{\prime}$ expresed in terms of the canonical presentation of
$\Gamma^{\ast}$ is:%
\[
\{x_{1}^{\prime}=c_{0}c_{1},x_{2}^{\prime}=c_{1}c_{2},x_{3}^{\prime}%
=c_{2}c_{3},x_{4}^{\prime}=c_{3}c_{0}\}
\]

The restriction of $\theta^{\ast}$ is:%
\begin{align*}
x_{1}^{\prime}  &  \rightarrow xy,x_{2}^{\prime}\rightarrow y(wx)^{g}w\\
x_{3}^{\prime}  &  \rightarrow(wx)^{g},x_{4}^{\prime}\rightarrow wx
\end{align*}
and $\left\langle xy,wx\right\rangle \cong D_{2g}$ since $xy(wx)(xy)^{-1}%
=xw=(wx)^{-1}$. Hence $\theta^{\ast}$ restricted to $\Gamma^{\prime}$ is
exactly the epimorphism in the proof of Theorem \ref{Theorem 1}, where $xy=A$
and $D=wx$. Note that $\theta^{\ast}(\Gamma^{\prime})=\left\langle
xy,wx\right\rangle \cong D_{2g}$, is not the subgroup $\left\langle
z,w\right\rangle \cong D_{2g}$ used in the construction of $G^{\ast}$.

Case 2. If $z\in yG^{\ast}$ then either $z=y(wx)^{s}$ or $z=y(wx)^{s}w$. Since
$y(wx)^{s}w$ is orientation preserving, the second case is not possible.
Assume $z=y(wx)^{s}$. From $z^{2}=1$ we have:%
\[
y(wx)^{s}y(wx)^{s}=(wx)^{st+s}=1
\]
so $s(t+1)\equiv0\operatorname{mod}2g$.

We have $(yz)^{2}=1$ then%
\[
yy(wx)^{s}yy(wx)^{s}=(wx)^{2s}=1
\]
so $s=g$ and $g(t+1)\equiv0\operatorname{mod}2g$.

Finally we have $(zw)^{2}=1$ then%
\[
y(wx)^{g}wy(wx)^{g}w=1
\]%
\[
(wx)^{tg}ywy(wx)^{g}w=1
\]%
\[
ywy=(wx)^{g(t-1)}w
\]
and by $g(t+1)\equiv0\operatorname{mod}2g$ we have $ywy=w$, then $G^{\ast
}=D_{2g}\times C_{2}=\left\langle x,w\right\rangle \times\left\langle
y\right\rangle $ and
\[
\theta^{\ast}(c_{0})=x,\theta^{\ast}(c_{1})=y,\theta^{\ast}(c_{2}%
)=y(wx)^{g},\theta^{\ast}(c_{3})=w
\]
The epimorphism $\theta^{\ast}$ is unique up to automorphism of $\Gamma^{\ast
}$ or $G^{\ast}$.

As in the preceding case, we shall see that the epimorphism $\theta^{\ast}$,
restricted to $\Gamma^{\prime}$, is equivalent by automorphisms of
$\Gamma^{\prime}$ and $G^{\ast}$ to the epimorphism constructed in the proof
of Theorem \ref{Theorem 1}. As before, a set of generators of a canonical
presentation of $\Gamma^{\prime}$ expressed in terms of the canonical
presentation of $\Gamma^{\ast}$ is:%
\[
\{x_{1}^{\prime}=c_{0}c_{1},x_{2}^{\prime}=c_{1}c_{2},x_{3}^{\prime}%
=c_{2}c_{3},x_{4}^{\prime}=c_{3}c_{0}\}
\]

The restriction of $\theta^{\ast}$ is:%
\begin{align*}
x_{1}^{\prime}  &  \rightarrow xy,x_{2}^{\prime}\rightarrow(wx)^{g}\\
x_{3}^{\prime}  &  \rightarrow(wx)^{g}w,x_{4}^{\prime}\rightarrow wx
\end{align*}
and $\left\langle xy,wx\right\rangle \cong D_{2g}$. Hence $\theta^{\ast}$
restricted to $\Gamma^{\prime}$ is exactly the epimorphism in the proof of
Theorem \ref{Theorem 1}.

Case b. $\Gamma^{\ast}$ has signature $(0;+;[2];(2,2g))$. Let
\[
\left\langle a,c_{0},c_{1},c_{2}:a^{2}=c_{i}^{2}=(c_{0}c_{1})^{2}=(c_{1}%
c_{2})^{2g}=xc_{0}xc_{2}=1\right\rangle
\]
be a canonical presentation of $\Gamma^{\ast}$. Assume that the epimorphism
$\theta^{\ast}:\Gamma^{\ast}\rightarrow G^{\ast}\cong\Gamma^{\ast}/\Gamma$, is
given by%
\[
\theta^{\ast}(a)=x,\theta^{\ast}(c_{0})=y,\theta^{\ast}(c_{1})=z,\theta^{\ast
}(c_{2})=w.
\]
Then we have that $\{x,y,z,w\}$ is a set of generators of $G^{\ast}$ and
$x,y,z,w,yz\,$have order $2$, $zw$ has order $2g$ and $xyxw=1$.

As $xyxw=1$ and $G^{\ast}\supsetneq G$ we have that $x\notin\left\langle
z,w\right\rangle \cong D_{2g}$. The group $\left\langle z,w\right\rangle $ has
index two in $G^{\ast}$, then is a normal subgroup of $G^{\ast}$. Hence:
\[
xzx\in\left\langle z,w\right\rangle \text{ and }xwx\in\left\langle
z,w\right\rangle
\]

Also we have that $xzx$ and $xwx$ are the images by $\theta^{\ast}$ of
orientation reversing transformations, then:%
\[
xzx=(zw)^{t_{1}}z\text{ and }xwx=(zw)^{t_{2}}z
\]
Using that $(yz)^{2}=1$, we have
\[
(xwx)z(xwx)z=(zw)^{t_{2}}zz(zw)^{t_{2}}zz=(zw)^{2t_{2}}=1
\]
from where $t_{2}=g$. Again by $(yz)^{2}=1$, we have%
\[
(xwx)z(xwx)z=1\text{, then }w(xzx)w(xzx)=1
\]
so%
\[
w(zw)^{t_{1}}zw(zw)^{t_{1}}z=(zw)^{2t_{1}+2}=1,
\]
then $t_{1}=g-1$.

We have that the group $G^{\ast}$ has presentation:%
\begin{gather*}
\left\langle x,z,w:x^{2}=z^{2}=w^{2}=(zw)^{2g}=1,xzx=(zw)^{g-1}z,xwx=(zw)^{g}%
z\right\rangle \\
\cong D_{2g}\rtimes_{\varphi}C_{2}=\left\langle z,w\right\rangle
\rtimes_{\varphi}\left\langle x\right\rangle
\end{gather*}
where $\varphi:D_{2g}\rightarrow D_{2g}$ is $z\rightarrow(zw)^{g-1}z$ and
$w\rightarrow(zw)^{g}z$.

The epimorphism $\theta^{\ast}$, unique up to automorphisms of $\Gamma^{\ast}$
or $G^{\ast}$, is:%
\[
a\rightarrow x;c_{0}\rightarrow xwx;c_{1}\rightarrow z;c_{2}\rightarrow w
\]

Note that $\Gamma^{\prime}$ is the canonical Fuchsian subgroup of
$\Gamma^{\ast}$. We shall see that the epimorphism $\theta^{\ast}$ restricted
to $\Gamma^{\prime}$ is equivalent, by automorphisms of $\Gamma^{\prime}$ and
$D_{2g}$, to the epimorphism constructed in the proof of Theorem
\ref{Theorem 1}. A set of generators of a canonical presentation of
$\Gamma^{\prime}$ expressed in terms of the canonical presentation of
$\Gamma^{\ast}$ is:%
\[
\{x_{1}^{\prime}=a,x_{2}^{\prime}=c_{0}ac_{0},x_{3}^{\prime}=c_{0}c_{1}%
,x_{4}^{\prime}=c_{1}ac_{0}a=c_{1}c_{2}\}
\]

The restriction is:%
\begin{align*}
x_{1}^{\prime}  &  \rightarrow x,x_{2}^{\prime}\rightarrow xwxwx=(zw)^{g}%
zwx=(zw)^{g+1}x\\
x_{3}^{\prime}  &  \rightarrow xwxz=(zw)^{g},x_{4}^{\prime}\rightarrow zw
\end{align*}
and $\left\langle x,zw\right\rangle \cong D_{2g}$ since%
\[
xzwx=xzxxwx=(zw)^{g-1}z(zw)^{g}z=(zw)^{-1}%
\]
Hence $\theta^{\ast}$ restricted to $\Gamma^{\prime}$ is exactly the
epimorphism in the proof of Theorem \ref{Theorem 1}, setting $x=A$ and $D=zw$.
Note that $\theta^{\ast}(\Gamma^{\prime})$ is $\left\langle x,zw\right\rangle
\cong D_{2g}$ but it is not the subgroup $\left\langle z,w\right\rangle \cong
D_{2g}$ used in the construction of $G^{\ast}$.

Since $xzx=(zw)^{g-1}z$, then $(zx)^{2}=(zw)^{g-1}$. If $g$ is even then $zx$
has order $4g$ and $D_{2g}\rtimes_{\varphi}C_{2}$ is isomorphic to $D_{4g}$.
Finally if $g$ is odd then $(xz)^{g}$ has of order 2 and it is in the center
of $D_{2g}\rtimes_{\varphi}C_{2}$, then $D_{2g}\rtimes_{\varphi}C_{2}%
\cong\left\langle x,w\right\rangle \times\left\langle (xz)^{g}\right\rangle
\cong D_{2g}\times C_{2}$.
\end{proof}

\section{Symmetry type of Riemann surfaces with automorphism group of order
$4g$}

\begin{theorem}
Let $X$ be a Riemann surface of genus $g$, uniformized by a surface Fuchsian
group $\Gamma$ and with automorphism group $G$ of order $4g$. If
$\Gamma^{\prime}$ is a Fuchsian group with $\Gamma\leq\Gamma^{\prime}$ and
$X/Aut(X)=\mathbb{D}/\Gamma^{\prime}$, we assume that the signature of
$\Gamma^{\prime}$ is $(0;+;[2,2,2,2g])$. Let $\mathrm{Aut}^{\pm}(X)=G^{\ast}$
be the extended automorphism group of $X$ and let $\Gamma^{\ast}$ be an NEC
group such that $G^{\ast}\cong\Gamma^{\ast}/\Gamma$. Assume that the signature
of $\Gamma^{\ast}$ is $(0;+;[-];\{(2,2,2,2g)\})$. Then there are four
conjugacy classes of anticonformal involutions and the sets of topological
types are either $\{+2,0,-2,-2\}$ if $g$ is odd and $\{+1,0,-1,-3\}$ if $g$ is
even, or $\{-1$,$-1,-g,-g\}$.
\end{theorem}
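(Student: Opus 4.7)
The plan is to compute each species explicitly inside $G^*=D_{2g}\times C_2$, combining a combinatorial tabulation of the canonical reflections with the standard oval-counting formula for NEC groups.

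I would first enumerate the orientation-reversing involutions of $G^*=D_{2g}\times C_2=\langle x,w\rangle\times\langle y\rangle$. Writing $r=wx$ so that $r^{2g}=1$ and $xrx=r^{-1}$, Theorem~\ref{Signaturas NEC} identifies the orientation-preserving subgroup as $\langle xy,r\rangle\cong D_{2g}$, and a quick check in the complementary coset shows that its involutions are exactly $y$, $yr^g$, and the $2g$ reflections $r^kx$, $k=0,\ldots,2g-1$. Since $2g$ is even the $r^kx$'s split into two $G^*$-conjugacy classes by the parity of $k$, and together with $\{y\}$ and $\{yr^g\}$ these give the four classes. Theorem~\ref{Signaturas NEC} also supplies two inequivalent epimorphisms $\theta^*_1,\theta^*_2:\Gamma^*\to G^*$: under $\theta^*_2$ each canonical reflection $c_i$ of $\Gamma^*$ lies in a distinct class, while under $\theta^*_1$ the image $\theta^*_1(c_2)=r^{g+1}x$ falls in $[x]$ when $g$ is odd and in $[w]$ when $g$ is even, so two canonical reflections collapse onto a single class and $\{yr^g\}$ receives none, forcing the species-$0$ entry.

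Next, I would apply the standard oval-counting formula
\[
\#\mathrm{ovals}(\sigma)\;=\;\sum_{i:\,\theta^*(c_i)\in[\sigma]}\bigl[\,C_{G^*}(\sigma):\theta^*(C_{\Gamma^*}(c_i))\,\bigr].
\]
The centralizer $C_{\Gamma^*}(c_i)$ is the stabilizer of the axis $L_i$; since three of the four link periods equal $2$, it is generated, modulo $\langle c_i\rangle$, by the neighbouring reflections $c_{i\pm1}$ together with, at the corner of period $2g$ adjacent to $c_0$ and $c_3$, the central half-rotation $(c_3c_0)^g$. Evaluating $\theta^*$ on these generators yields a subgroup of $G^*$ of order $4$, $8$ or $4g$; substitution then produces the tuples of oval counts $(1,1,g,g)$ for $\theta^*_2$, and $(2,2,2,0)$ or $(1,1,3,0)$ for $\theta^*_1$ according as $g$ is odd or even.

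Finally, the sign of each species is the orientability character of $X/\langle\sigma\rangle=\mathbb{D}/\langle\Gamma,c\rangle$. Harnack's parity $k\equiv g+1\pmod 2$ rules out separation whenever $k$ has the wrong parity, which in the $\theta^*_2$ case with $g$ odd already forces the full tuple $\{-1,-1,-g,-g\}$. In the remaining subcases I would decide the sign by analysing the NEC presentation of $\Lambda^\sigma=\langle\Gamma,c\rangle$: the sign is $+$ iff no glide reflection appears in $\Lambda^\sigma\setminus\Gamma$, equivalently iff the image of the axis of $c$ is a separating mod-$2$ $1$-cycle in $X$. This would single out the unique separating class in each $\theta^*_1$ tuple, producing $\{+2,0,-2,-2\}$ for $g$ odd and $\{+1,0,-1,-3\}$ for $g$ even, and confirm that no class separates under $\theta^*_2$. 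The main obstacle is precisely this last sign determination: Harnack's inequality does not always resolve it (notably for the $k=1$ classes of $\theta^*_2$ when $g$ is even, and for all the classes with ovals in the $\theta^*_1$ tuples), so one must carry out an explicit algebraic check---exhibiting either a glide reflection in $\Lambda^\sigma$ or a splitting of $X$---which is the delicate combinatorial step.
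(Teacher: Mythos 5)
Your enumeration of the four conjugacy classes of orientation-reversing involutions, the identification of which canonical reflections hit which class under $\theta_1^*$ and $\theta_2^*$ (in particular that $[y(wx)^g]$ receives no reflection under $\theta_1^*$, forcing the species-$0$ entry), and the application of Gromadzki's centralizer-index formula all match the paper's argument, and your oval counts $(1,1,g,g)$, $(2,2,2,0)$ and $(1,1,3,0)$ agree with the ones computed there.

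The genuine gap is the determination of the signs. You correctly observe that Harnack parity settles only the $\theta_2^*$ tuple for $g$ odd, and for everything else you propose to decide orientability of $X/\langle\sigma\rangle$ by searching for a glide reflection in $\langle\Gamma,c\rangle$ or a separating cycle --- but you explicitly stop there, calling it ``the delicate combinatorial step'' without carrying it out. Since the whole content of the sign assignment (which of the three classes with ovals in the $\theta_1^*$ case is the separating one, and why neither $1$-oval class separates under $\theta_2^*$ when $g$ is even) lives in that step, the proof is incomplete as written. The paper avoids this computation entirely by using the fact, recorded in the remark after Theorem \ref{Theorem 1}, that all surfaces in the family are \emph{hyperelliptic} with hyperelliptic involution $D^g=(wx)^g$; the signs then follow directly from the classification of symmetry types of hyperelliptic Riemann surfaces (Theorem 3.4.4 of \cite{BCGG}). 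If you want to keep your first-principles route, you must actually exhibit, for each class, either a glide reflection in $\langle\Gamma,\sigma\rangle\smallsetminus\Gamma$ or a separation of $X\smallsetminus\mathrm{Fix}(\sigma)$; otherwise, invoke hyperellipticity as the paper does.
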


\begin{proof}
By Theorem \ref{Signaturas NEC} the automorphism group in this case is
isomorphic to
\[
D_{2g}\times C_{2}=\left\langle w,x:w^{2}=x^{2}=(wx)^{2g}=1\right\rangle
\times\left\langle y:y^{2}=1\right\rangle .
\]

There are two possible epimorphisms $\theta_{i}^{\ast}:\Gamma^{\ast
}\rightarrow G^{\ast}$, $i=1,2$:%
\[
\theta_{1}^{\ast}(c_{0})=x,\theta_{1}^{\ast}(c_{1})=y,\theta_{1}^{\ast}%
(c_{2})=(wx)^{g}w,\theta_{1}^{\ast}(c_{3})=w
\]
and%
\[
\theta_{2}^{\ast}(c_{0})=x,\theta_{2}^{\ast}(c_{1})=y,\theta_{2}^{\ast}%
(c_{2})=y(wx)^{g},\theta_{2}^{\ast}(c_{3})=w.
\]

There are four conjugacy classes of involutions in $D_{2g}\times C_{2}$ not in
$\theta_{i}^{\ast}(\Gamma^{\prime})$, a set of representatives of each class
is $\{x,y,y(wx)^{g},w\}$.

For each involution $\iota$ in $\mathrm{Aut}^{\pm}(X)=G^{\ast}$ the number of
fixed ovals of $\iota$ is given by the following formula of G. Gromadzki (cf.
\cite{G}):%
\[%
{\textstyle\sum\nolimits_{c_{i}\text{, s.t. }\theta_{i}^{\ast}(c_{i}%
)^{h}=\iota}}
\left[  C(G^{\ast},\theta_{i}^{\ast}(c_{i})):\theta_{i}^{\ast}(C(\Gamma^{\ast
},c_{i}))\right]  .
\]

For the epimorphism $\theta_{1}^{\ast}$ we have the following centralizers:

$C(G^{\ast},\theta_{1}^{\ast}(c_{0}))=C(G^{\ast},x)=\left\langle
x,(wx)^{g}\right\rangle \times\left\langle y\right\rangle $ and $\theta
_{1}^{\ast}(C(\Gamma^{\ast},c_{0}))=\left\langle x,(wx)^{g}\right\rangle
\times\left\langle y\right\rangle $

$C(G^{\ast},\theta_{1}^{\ast}(c_{1}))=C(G^{\ast},y)=G^{\ast}$ and $\theta
_{1}^{\ast}(C(\Gamma^{\ast},c_{1}))=\left\langle x,(wx)^{g}w\right\rangle
\times\left\langle y\right\rangle $

$C(G^{\ast},\theta_{1}^{\ast}(c_{2}))=C(G^{\ast},(wx)^{g}w)=\left\langle
w,(wx)^{g}\right\rangle \times\left\langle y\right\rangle $ and $\theta
_{1}^{\ast}(C(\Gamma^{\ast},c_{3}))=\left\langle w,(wx)^{g}\right\rangle
\times\left\langle y\right\rangle $

$C(G^{\ast},\theta_{1}^{\ast}(c_{3}))=C(G^{\ast},w)=\left\langle
w,(wx)^{g}\right\rangle \times\left\langle y\right\rangle $ and $\theta
_{1}^{\ast}(C(\Gamma^{\ast},c_{3}))=\left\langle w,(wx)^{g}\right\rangle .$

For the class of involutions $[x]$ we have either:

$\left[  C(G^{\ast},\theta_{1}^{\ast}(c_{0})):\theta_{i}^{\ast}(C(\Gamma
^{\ast},c_{0}))\right]  =1$ oval, if $g$ is even or

$\left[  C(G^{\ast},\theta_{1}^{\ast}(c_{0})):\theta_{i}^{\ast}(C(\Gamma
^{\ast},c_{0}))\right]  +\left[  C(G^{\ast},\theta_{1}^{\ast}(c_{2}%
)):\theta_{i}^{\ast}(C(\Gamma^{\ast},c_{2}))\right]  =2$ ovals, if $g$ is odd.

Note that $\left\langle x,(wx)^{g}w\right\rangle $ is isomorphic to $D_{2g}$
if $g$ is even and it is isomorphic to $D_{g}$ if $g$ is odd. Hence the class
of involutions $[y]$ has $2$ ovals if $g$ is odd and $1$ oval if $g$ is even.

There is no reflection $c_{i}$ such that $\theta_{1}^{\ast}(c_{i})$ is in the
conjugacy class represented by $y(wx)^{g}$, then the anticonformal involutions
in the class $[y(wx)^{g}]$ have no ovals.

Finally for the class of involutions $[w]$ we have either:

$\left[  C(G^{\ast},\theta_{1}^{\ast}(c_{2})):\theta_{i}^{\ast}(C(\Gamma
^{\ast},c_{2}))\right]  +\left[  C(G^{\ast},\theta_{1}^{\ast}(c_{3}%
)):\theta_{i}^{\ast}(C(\Gamma^{\ast},c_{3}))\right]  =3$ ovals, if $g$ is even or

$\left[  C(G^{\ast},\theta_{1}^{\ast}(c_{3})):\theta_{i}^{\ast}(C(\Gamma
^{\ast},c_{3}))\right]  =2$ ovals, if $g$ is odd.

The set of topological types is $\{\pm2,\pm2,\pm2,0\}$ if $g$ is odd and
$\{\pm3,\pm1,\pm1,0\}$ if $g$ is even. Now applying Theorem 3.4.4 of
\cite{BCGG}, the topological types of the anticonformal involutions are
$\{+2,0,-2,-2\}$ if $g$ is odd and $\{+1,0,-1,-3\}$ if $g$ is even.\smallskip

For the epimorphism $\theta_{2}^{\ast}$ we have:

$C(G^{\ast},\theta_{2}^{\ast}(c_{0}))=C(G^{\ast},x)=\left\langle
x,(wx)^{g}\right\rangle \times\left\langle y\right\rangle $ and $\theta
_{1}^{\ast}(C(\Gamma^{\ast},c_{0}))=\left\langle x,(wx)^{g}\right\rangle
\times\left\langle y\right\rangle $, then $[x]$ has one oval.

$C(G^{\ast},\theta_{2}^{\ast}(c_{1}))=C(G^{\ast},y)=G^{\ast}$ and $\theta
_{1}^{\ast}(C(\Gamma^{\ast},c_{1}))=\left\langle x,(wx)^{g}\right\rangle
\times\left\langle y\right\rangle $, thus $[y]$ has $g$ ovals.

$C(G^{\ast},\theta_{2}^{\ast}(c_{2}))=C(G^{\ast},y(wx)^{g})=G^{\ast}$ and
$\theta_{1}^{\ast}(C(\Gamma^{\ast},c_{2}))=\left\langle w,(wx)^{g}%
\right\rangle \times\left\langle y\right\rangle $, then $[y(wx)^{g}]$ has $g$ ovals.

$C(G^{\ast},\theta_{2}^{\ast}(c_{3}))=C(G^{\ast},w)=\left\langle
w,(wx)^{g}\right\rangle \times\left\langle y\right\rangle $ and $\theta
_{1}^{\ast}(C(\Gamma^{\ast},c_{3}))=\left\langle w,(wx)^{g}\right\rangle
\times\left\langle y\right\rangle $, thus $[w]$ has one oval.

The set of topological types is $\{\pm1,\pm1,-g,-g\}$.

By Theorem 3.4.4 of \cite{BCGG} the topological types of the anticonformal
involutions are $\{-1,-1,-g,-g\}$.
\end{proof}

\begin{theorem}
Let $X$ be a Riemann surface of genus $g$, uniformized by a surface Fuchsian
group $\Gamma$ and with automorphism group $G$ of order $4g$. If
$\Gamma^{\prime}$ is a Fuchsian group with $\Gamma\leq\Gamma^{\prime}$ and
$X/Aut(X)=\mathbb{D}/\Gamma^{\prime}$, we assume that the signature of
$\Gamma^{\prime}$ is $(0;+;[2,2,2,2g])$. Let $Aut^{\pm}(X)=G^{\ast}$ be the
extended automorphism group of $X$ and $\Gamma^{\ast}$ be an NEC group such
that $G^{\ast}\cong\Gamma^{\ast}/\Gamma$. Assume that the signature of
$\Gamma^{\ast}$ is $(0;+;[2];\{(2,2g)\})$. The set of topological types of the
anticonformal involutions of $X$ is $\{0,0,-2,-2\}$ if the genus $g$ is odd
and $\{-2\}$ if the genus $g$ is even.
\end{theorem}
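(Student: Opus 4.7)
The plan is to follow the template of the preceding theorem: starting from the explicit epimorphism $\theta^*:\Gamma^*\to G^*$ recorded in case (b) of Theorem~\ref{Signaturas NEC}, enumerate the $G^*$-conjugacy classes of anticonformal involutions, count their ovals via Gromadzki's formula, and determine the separability signs via Theorem 3.4.4 of \cite{BCGG}. Recall that
\[
G^*=\langle z,w\rangle\rtimes_\varphi\langle x\rangle,\quad \theta^*(a)=x,\;\theta^*(c_0)=xwx,\;\theta^*(c_1)=z,\;\theta^*(c_2)=w,
\]
and that the orientation-preserving subgroup is $\theta^*(\Gamma')=\langle x,zw\rangle\cong D_{2g}$, so the anticonformal involutions of $X$ are precisely the $G^*$-classes of involutions lying in $G^*\setminus\langle x,zw\rangle$. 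A useful identity throughout the analysis is $xzx=(zw)^{g-1}z=z(zw)^{g+1}$, together with $xwx=(zw)^g z$ and the central/non-central behaviour of $(zw)^g$.

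For $g$ even, $G^*\cong D_{4g}$, and I would first show that all three reflections $c_0,c_1,c_2$ project into a single $G^*$-conjugacy class of anticonformal involutions. Inside $\langle z,w\rangle\cong D_{2g}$ the reflections split into the two parity classes $\{z(zw)^{2j}\}\ni z$ and $\{z(zw)^{2j+1}\}\ni w$, and for $g$ even the exponent $g+1$ is odd, so $xzx=z(zw)^{g+1}$ lies in the class of $w$; since $x\in G^*$ already gives $xzx\sim z$, the two $D_{2g}$-parity classes fuse into a single $G^*$-class. The unique rotation-involution of $D_{4g}$ is its centre $(zw)^g\in\langle x,zw\rangle$, hence orientation-preserving, so this fused class is the only anticonformal class. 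Summing the three indices $[C(G^*,\theta^*(c_i)):\theta^*(C(\Gamma^*,c_i))]$ from Gromadzki's formula, where $C(\Gamma^*,c_0)=C(\Gamma^*,c_1)=\langle c_0,c_1\rangle$ (because $(c_0c_1)^2=1$) and $C(\Gamma^*,c_2)=\langle c_2\rangle$ (because $(c_1c_2)^{2g}$ has order $>2$), I expect exactly $2$ ovals. Theorem~3.4.4 of \cite{BCGG} then rules out the $+$ sign, giving the type $\{-2\}$.

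For $g$ odd, $G^*\cong D_{2g}\times C_2$ with central factor generated by $\eta=(xz)^g$. The involutions in the coset $\eta\cdot\langle x,zw\rangle$ are $\eta$, $\eta(zw)^g$, and the $2g$ elements $\eta\cdot x(zw)^i$. Since $\eta$ and $(zw)^g$ are central in $G^*$, the first two give singleton $G^*$-classes; and since for $g$ odd the exponent $g+1$ is even, outer conjugation by any element of the anticonformal coset preserves the parity of $i$, so $\{x(zw)^i\}$ remains split into two classes of size $g$. This produces four $G^*$-conjugacy classes of anticonformal involutions. Of the images $\theta^*(c_i)$: $\theta^*(c_1)=z$ falls into one reflection class, while $\theta^*(c_2)=w$ and $\theta^*(c_0)=xwx\sim w$ fall into the other; no $c_i$ maps to the two central classes, which therefore contribute $0$ ovals. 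Applying Gromadzki's formula to the two reflection classes with the same centralizer data as above should give $2$ ovals each, and Theorem~3.4.4 of \cite{BCGG} forces the $-$ sign on these reflection classes, yielding the symmetry type $\{0,0,-2,-2\}$.

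The main obstacle will be the arithmetic of matching $\theta^*(C(\Gamma^*,c_i))$ against $C(G^*,\theta^*(c_i))$ to extract the index $[C(G^*,\iota):\theta^*(C(\Gamma^*,c_i))]$ for each contributing reflection. In the even case, the fusion of the two $D_{2g}$-parity classes into one $G^*$-class means that all three of $c_0,c_1,c_2$ contribute to the same oval sum, and the three indices must cancel down to exactly $2$; the key identification will be that $\langle xwx,z\rangle=\langle(zw)^g,z\rangle$ realises the full centralizer $C(G^*,z)$ (of order $4$ in $D_{4g}$) while $\theta^*(C(\Gamma^*,c_2))=\langle w\rangle$ is only of index $2$ inside $C(G^*,w)$. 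In the odd case, the same matching must split correctly between the classes of $z$ and $w$ to give $2$ ovals each, after which Theorem~3.4.4 of \cite{BCGG} is essentially automatic.
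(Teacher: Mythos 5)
Your overall strategy is the same as the paper's: take the explicit epimorphism from case (b) of Theorem \ref{Signaturas NEC}, enumerate the conjugacy classes of anticonformal involutions (one class for $g$ even, four for $g$ odd --- your enumeration, including the parity/fusion argument for $g$ even and the identification of the two central classes $[(xz)^{g}]$ and $[(xz)^{g}(zw)^{g}]=[(xw)^{g}]$ for $g$ odd, is correct and agrees with the paper's representatives $\{z,w,(xz)^{g},(xw)^{g}\}$), then count ovals with Gromadzki's formula and fix the signs via \cite{BCGG}. However, there is a genuine gap in how you set up Gromadzki's formula. The sum there runs over representatives of the $\Gamma^{\ast}$-conjugacy classes of canonical reflections, and in the signature $(0;+;[2];\{(2,2g)\})$ the boundary relation gives $c_{2}=ac_{0}a^{-1}$, so $c_{0}$ and $c_{2}$ are conjugate in $\Gamma^{\ast}$ and only one of them may appear in the sum. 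You sum over all three of $c_{0},c_{1},c_{2}$ in the even case, and over both $c_{0}$ and $c_{2}$ for the class $[w]$ in the odd case. In the even case this is fatal as written: three indices, each a positive integer, cannot ``cancel down to exactly $2$''; in the odd case it would return $4$ ovals for $[w]$ instead of $2$. The paper implicitly sums only over $c_{0}$ and $c_{1}$ (equivalently $c_{1}$ and $c_{2}$).

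A second, smaller defect is your identification of the centralizers in $\Gamma^{\ast}$. The centralizer of a canonical reflection in an NEC group is infinite, and since the link period $2g$ is even, the half-turn $(c_{1}c_{2})^{g}$ centralizes both $c_{1}$ and $c_{2}$; hence $\theta^{\ast}(C(\Gamma^{\ast},c_{2}))$ contains $\langle w,(zw)^{g}\rangle$ and is not merely $\langle w\rangle$. With the correct centralizers one finds, for $g$ even, index $1$ for each of the two non-conjugate reflections (total $2$ ovals for the unique class $[z]$), and for $g$ odd, index $2$ for $c_{1}$ (class $[z]$) and index $2$ for $c_{2}$ (class $[w]$), giving $2$ ovals each and $0$ for the two central classes --- exactly the paper's counts. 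Your final symmetry types $\{0,0,-2,-2\}$ and $\{-2\}$ are correct, but the oval counts as planned do not come out right without these two repairs.
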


\begin{proof}
By Theorem \ref{Signaturas NEC} the automorphism group in this case is
isomorphic to
\begin{align*}
&  \left\langle x,z,w:x^{2}=z^{2}=w^{2}=(zw)^{2g}=1,xzx=(zw)^{g-1}%
z,xwx=(zw)^{g}z\right\rangle \\
&  =D_{2g}\rtimes_{\varphi}C_{2}%
\end{align*}
If $g$ is odd, there are four conjugacy classes of orientation reversing order
two elements in $Aut^{\pm}(X)\cong D_{2g}\times C_{2}=\left\langle
x,w\right\rangle \times\left\langle (xz)^{g}\right\rangle $, a set of
representatives of each class is $\{z,w,(xz)^{g},(xw)^{g}\}$. If $g$ is even
the group $D_{2g}\rtimes_{\varphi}C_{2}$ is isomorphic to $D_{4g}$ and there
is only a conjugacy class of orientation reversing involutions represented by
$z$.

The epimorphism $\theta^{\ast}:\Gamma^{\ast}\rightarrow G^{\ast}$ is:%
\[
a\rightarrow x;c_{0}\rightarrow xwx;c_{1}\rightarrow z;c_{2}\rightarrow w
\]

Assume that $g$ is odd. To use the formula of Gromadzky (\cite{G}) we need to
compute the centralizers:%
\begin{align*}
C(G^{\ast},\theta^{\ast}(c_{1}))  &  =C(G^{\ast},z)=\left\langle
z,(zw)^{g},(xz)^{g}\right\rangle \cong C_{2}\times C_{2}\times C_{2}\\
C(G^{\ast},\theta^{\ast}(c_{2}))  &  =C(G^{\ast},w)=\left\langle
w,(zw)^{g},(xw)^{g}\right\rangle \cong C_{2}\times C_{2}\times C_{2}.
\end{align*}
Now we have:%
\[
\left[  C(G^{\ast},\theta^{\ast}(c_{2})):\theta^{\ast}(C(\Gamma^{\ast}%
,c_{2}))\right]  =\left[  C(G^{\ast},\theta^{\ast}(c_{1})):\theta^{\ast
}(C(\Gamma^{\ast},c_{1}))\right]  =2
\]
The number of ovals of the involutions in the conjugacy classes $[z]$ and
$[w]$ is $2$.

The conjugacy classes $[(xz)^{g}]$ and $[(xw)^{g}]$ correspond to involutions
without ovals.

If $g$ is even we have:%
\begin{align*}
C(G^{\ast},\theta^{\ast}(c_{1}))  &  =C(G^{\ast},z)=\left\langle
z,(xz)^{2g}\right\rangle \cong C_{2}\times C_{2}\\
C(G^{\ast},\theta^{\ast}(c_{2}))  &  =C(G^{\ast},w)=\left\langle
w,(xz)^{2g}\right\rangle \cong C_{2}\times C_{2}.
\end{align*}
Therefore the number of ovals of the involutions in $[z]$ is:%
\[
\left[  C(G^{\ast},\theta^{\ast}(c_{0})):\theta^{\ast}(C(\Gamma^{\ast}%
,c_{0}))\right]  +\left[  C(G^{\ast},\theta^{\ast}(c_{1})):\theta^{\ast
}(C(\Gamma^{\ast},c_{1}))\right]  =2.
\]

Applying Theorem 3.3.2 of \cite{BCGG}, the topological types are:
$\{0,0,-2,-2\}$ if $g$ is odd and $\{-2\}$ if $g$ is even.
\end{proof}

\section{On the set of points with automorphism group of order $4g$ in the
moduli space of Riemann surfaces}

In this section we study family $\mathcal{F}_{g}$ of surfaces with $4g$
automorphims as subspace of the moduli space $\mathcal{M}_{g}$.

\begin{theorem} \label{familiaF}
The set of points $\mathcal{F}_{g}\subset\mathcal{M}_{g}$ corresponding to
Riemann surfaces of genus $g\geq2\,$given in Theorem \ref{Theorem 1} is 
the Riemann sphere with three punctures.
\end{theorem}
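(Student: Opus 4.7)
The plan is to construct an explicit parameterization $\Phi : \mathbb{C} \setminus \{0,1\} \to \mathcal{F}_g$ and to show it is a bijective holomorphic map. By Theorem \ref{Theorem 1}, every $X \in \mathcal{F}_g$ is recovered from its quotient $X/\mathrm{Aut}(X) \cong \mathbb{P}^1$ with four branch points carrying the monodromies $A,\ D^{g+1}A,\ D^g,\ D$ in $D_{2g}$. Among these, $D$ is the unique element of order $2g$ and $D^g$ is the unique central involution, while the remaining two monodromies are the specific reflections $A$ and $D^{g+1}A$. I normalize via $\mathrm{PGL}_2(\mathbb{C})$ by placing the monodromy-$D$ branch at $\infty$, the monodromy-$A$ branch at $0$, and the monodromy-$D^{g+1}A$ branch at $1$, so that the monodromy-$D^g$ branch sits at some $\lambda \in \mathbb{C} \setminus \{0,1\}$. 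Setting $\Phi(\lambda) = X_\lambda$ yields an algebraic family of branched covers, and hence a holomorphic map.

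Surjectivity of $\Phi$ is immediate from Theorem \ref{Theorem 1}. For injectivity, suppose $X_{\lambda_1} \cong X_{\lambda_2}$. Since $D_{2g}$ is the full automorphism group of each $X_{\lambda_i}$, any such isomorphism is $\phi$-equivariant for some $\phi \in \mathrm{Aut}(D_{2g})$, and therefore descends to a M\"obius transformation $\mu$ of $\mathbb{P}^1$ intertwining the monodromies up to simultaneous conjugation by some $g_0 \in D_{2g}$. Because $\phi$ preserves the center of $D_{2g}$ and the order of each element, $\mu$ must fix $\infty$ and must send $\lambda_1$ to $\lambda_2$; so $\mu$ is affine and permutes $\{0,1\}$. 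Hence either $\mu = \mathrm{id}$, giving $\lambda_1 = \lambda_2$, or $\mu(x) = 1-x$, giving $\lambda_2 = 1 - \lambda_1$.

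The main obstacle is to exclude the second possibility for $g \geq 2$. This reduces to showing that no $(\phi, g_0) \in \mathrm{Aut}(D_{2g}) \times D_{2g}$ satisfies $\phi(m_i) = g_0\, m_{\pi(i)}\, g_0^{-1}$ for $(m_i) = (A, D^{g+1}A, D^g, D)$ and $\pi$ the transposition $(1\,2)$. Writing a general $\phi$ as $D \mapsto D^k$, $A \mapsto D^j A$ with $\gcd(k,2g) = 1$, the equation at the fourth coordinate forces either $g_0 \in \langle D \rangle$ with $k = 1$, or $g_0$ a reflection with $k = -1$. Substituting these into the first two equations yields, in both cases, the congruence $2(g+1) \equiv 0 \pmod{2g}$, i.e.\ $g \mid 1$, which is impossible for $g \geq 2$. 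Thus $\Phi$ is a holomorphic bijection and identifies $\mathcal{F}_g$ with $\mathbb{P}^1 \setminus \{0,1,\infty\}$, the Riemann sphere with three punctures.
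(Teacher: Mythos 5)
Your injectivity step is where the argument breaks down, and in fact the map $\Phi$ you construct is not injective. An isomorphism $X_{\lambda_1}\to X_{\lambda_2}$ covering a M\"obius map $\mu$ does \emph{not} force the two monodromy $4$-tuples to be related by an automorphism $\phi$ of $D_{2g}$ together with a \emph{simultaneous} conjugation by a single $g_0$ and the permutation $\pi$: the generators $x_1,\dots,x_4$ depend on a choice of arcs from a base point, and $\mu$ carries one such system to another system that differs from the standard one by a Hurwitz (braid) move, not merely by a relabelling. The correct test is whether $(\phi(A),\phi(D^{g+1}A),\phi(D^g),\phi(D))$ lies in the braid orbit of $(A,D^{g+1}A,D^g,D)$ realizing $\pi=(1\,2)$ --- and it does. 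The half-twist $(x_1,x_2)\mapsto(x_1x_2x_1^{-1},x_1)$ sends the tuple to $(A\,D^{g+1}A\,A,\;A,\;D^g,\;D)=(D^{g-1}A,\,A,\,D^g,\,D)$, which is exactly $(\phi(A),\phi(D^{g+1}A),\phi(D^g),\phi(D))$ for the automorphism $\phi\colon D\mapsto D$, $A\mapsto D^{g-1}A$ (indeed $\phi(D^{g+1}A)=D^{g+1}D^{g-1}A=A$). Hence $X_\lambda\cong X_{1-\lambda}$. You can see this concretely: the surfaces in the family are the cyclic covers $y^{2g}=s\,(s^2-\tau s+1)^g$ of $X/\langle D\rangle\cong\mathbb{P}^1$ with $\tau=4\lambda-2$, and $(s,y)\mapsto(-s,\eta y)$ with $\eta^{2g}=-1$ is an isomorphism $X_\tau\to X_{-\tau}$ covering $z\mapsto 1-z$ on $X/\mathrm{Aut}(X)$. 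Your congruence computation is internally correct, but it only rules out the (too restrictive) simultaneous-conjugacy relation; so $\Phi$ is generically $2$-to-$1$, not bijective. A second, related gap: at the fixed point $\lambda=\tfrac12$ (i.e.\ $\tau=0$) the map above becomes an automorphism of $X_{1/2}$ of order $4g$, so $X_{1/2}$ is the Wiman curve of type II with $8g$ automorphisms and does not belong to $\mathcal{F}_g$ at all; your parameterization never removes it.

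The conclusion of the theorem nevertheless survives these corrections, by a small miracle of the quotient: $\bigl(\widehat{\mathbb{C}}\setminus\{0,1,\tfrac12,\infty\}\bigr)/(\lambda\sim 1-\lambda)$ is again a sphere with three punctures (set $u=(2\lambda-1)^2$; the punctures are $u=0,1,\infty$). But your proof does not establish this, and it misidentifies the punctures: under the correct picture one of the three punctures of $\mathcal{F}_g$ is the smooth Wiman surface (the image of $\lambda=\tfrac12$), not a third degeneration point --- which is precisely what the paper's later boundary analysis finds ($\overline{a_1\cup a_2\cup b}\smallsetminus(a_1\cup a_2\cup b)$ consists of two nodal surfaces and the Wiman surface). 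The paper's own proof instead works with the image of the Teichm\"uller disc $\mathbf{T}_{(0;+;[2,2,2,2g])}$ under the modular action; to repair your argument you would need to determine the full stabilizer of the epimorphism $\theta$ under the braid action (it contains the half-twist above) and then take the quotient, rather than asserting injectivity.
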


\begin{proof}
Let $\mathbf{T}_{g}$ be the Teichm\"{u}ller space of classes of surface
Fuchsian groups of genus $g$ and let $\pi:\mathbf{T}_{g}\rightarrow
\mathcal{M}_{g}$ be the canonical projection. The points in $\pi
^{-1}(\mathcal{F}_{g})$ are classes of surface Fuchsian groups contained in
Fuchsian groups with signature $(0;+;[2,2,2,2g])$. By Theorem \ref{Theorem 1},
up to automorphisms of Fuchsian groups and dihedral groups, there is only one
possible normal inclusion of surface groups of genus $g$ in groups with
signature $(0;+;[2,2,2,2g])$, this inclusion produces $i_{\ast}:\mathbf{T}
_{(0;+;[2,2,2,2g])}\rightarrow\mathbf{T}_{g}$ and $\pi\circ i_{\ast
}(\mathbf{T}_{(0;+;[2,2,2,2g])})\supset\mathcal{F}_{g}$. The set $i_{\ast
}(\mathbf{T}_{(0;+;[2,2,2,2g])})$ is an open disc and the map $\pi\circ
i_{\ast}\mid_{(\pi\circ i_{\ast})^{-1}(\mathcal{F}_{g})}$ is the projection
given by the action of a properly discontinuous group, then $\mathcal{F}_{g}$
is a real non-compact Riemann surface (a Riemann surface with punctures).
The family $\mathcal{F}_{g}$ can be identified with the space of orbifolds with 
three conic points of order 2, and one of order $2g$. One conic point of order 
2 corresponds to the conjugacy class of  the central involution $D^{g}$ in 
$D_{2g} =\langle A, D / A^2 =D^{2g}=(DA)^2=1 \rangle$, 
the conic point of order $2g$ corresponds to the conjugacy class of $D$. 
As the morphism $\theta$ in \ref{Theorem 1} is an epimorphism a second conic 
point of order 2 corresponds to the conjugacy class of $A$. Using a
M\"{o}bius transformation we can assume that the two order 2 conic points
are $0$ (corresponding to $[D^{g}]$)$,1$ (corresponding to $[A]$), and the 
conic point of order $2g$ is $\infty$ (corresponding to $[D]$). Then $\mathcal{F}_{g}$
 is parametrized by the position of the third conic point of order $2$. Hence $\mathcal{F}_{g}$
  is the Riemann sphere with three punctures.

The Riemann surface $\mathcal{F}_{g}$ admits an anticonformal involution whose
fixed point set is formed by the real Riemann surfaces in $\mathcal{F}_{g}$.
\end{proof}

\begin{theorem}
The real Riemann surface $\mathcal{F}_{g}$ has an anticonformal involution
whose fixed point set consists of three arcs $a_{1},a_{2},b$, corresponding to
the real Riemann surfaces in the family. The topological closure of
$\mathcal{F}_{g}$ in $\widehat{\mathcal{M}_{g}}$ has an anticonformal
involution whose fixed point set $\overline{a_{1}\cup a_{2}\cup b}$ (closure
of $a_{1}\cup a_{2}\cup b$ in $\widehat{\mathcal{M}_{g}}$) is a closed Jordan
curve. The set $\overline{a_{1}\cup a_{2}\cup b}\smallsetminus(a_{1}\cup
a_{2}\cup b)$ consists of three points: two nodal surfaces and the Wiman
surface of type II.
\end{theorem}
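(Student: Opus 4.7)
The plan is to leverage the identification $\mathcal{F}_g \cong \widehat{\mathbb{C}} \setminus \{0, 1, \infty\}$ of Theorem~\ref{familiaF}, where the parameter $\lambda$ encodes the position of the fourth (moving) cone point on the base orbifold. First I would exhibit the anticonformal involution $\sigma : \lambda \mapsto \bar\lambda$ on this parameter space and verify it is the restriction of the natural real structure on $\mathcal{M}_g$: complex conjugation of the base $\widehat{\mathbb{C}}$ fixes the real cone points $0, 1, \infty$ and preserves their assigned $D_{2g}$-conjugacy classes, so the orbifold at $\bar\lambda$ is the complex conjugate of the orbifold at $\lambda$, and this lifts through the $D_{2g}$-covering to an anticonformal automorphism of the corresponding Riemann surface. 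The fixed point set of $\sigma$ on $\widehat{\mathbb{C}} \setminus \{0, 1, \infty\}$ is $\mathbb{R} \setminus \{0, 1\}$, whose three connected components are the arcs $a_1 = (-\infty, 0)$, $a_2 = (0, 1)$, and $b = (1, \infty)$. By Proposition~\ref{Conected} each arc is a single equisymmetric stratum and hence corresponds to exactly one of the three topological types of real forms determined in Section~5.

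Next, I would analyze the closure of $\mathcal{F}_g$ in $\widehat{\mathcal{M}}_g$. Since $\widehat{\mathcal{M}}_g$ is a projective variety and $\mathcal{F}_g$ is a $1$-dimensional quasi-projective subvariety, $\overline{\mathcal{F}_g}$ is a compact complex curve. As $\mathcal{F}_g$ has genus $0$, $\overline{\mathcal{F}_g} \cong \widehat{\mathbb{C}}$, obtained by adjoining the three limits $\lambda \to 0, 1, \infty$. The topological closure of $\mathbb{R} \cup \{\infty\}$ inside this sphere is a standard circle, hence a closed Jordan curve passing through the three added points, exactly as claimed.

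Finally I would identify the three added points geometrically. At each puncture the moving order-$2$ cone point collides on the base with one of the fixed cone points among $0, 1, \infty$. Two of these collisions involve two order-$2$ cone points of distinct $D_{2g}$-conjugacy classes; these force a short simple closed geodesic on the base orbifold to shrink, which lifts through the $D_{2g}$-covering to simultaneously pinched geodesics on the Riemann surface, producing two distinct nodal surfaces lying in $\widehat{\mathcal{M}}_g \setminus \mathcal{M}_g$. The remaining puncture corresponds to the collision of the moving order-$2$ cone point with the order-$2g$ cone point; here the limiting orbifold acquires signature $(0;+;[2,4,4g])$ (the Wiman signature), whose $D_{2g}$-cover, appearing as an index-$2$ subgroup of the $8g$-order group from Case~3 of Proposition~\ref{Proposition123}, is the smooth Wiman curve $X_{8g}: w^2 = z(z^{2g}-1)$ of the remark following Theorem~\ref{Theorem 1}. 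The main obstacle is precisely this trichotomy: rigorously distinguishing the smooth Wiman limit from the two nodal limits requires tracking the uniformizing Fuchsian groups to the boundary of Teichm\"{u}ller space and verifying that only the order-$2$/order-$2g$ collision lands in a genuine Fuchsian stratum (that of signature $(0;+;[2,4,4g])$) rather than in a cusped, hence nodal, limit.
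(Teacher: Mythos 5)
Your first two paragraphs are a reasonable alternative to the paper's route: the paper does not argue on the $\lambda$-plane at all, but instead obtains $a_{1},a_{2},b$ directly as the images $\pi\circ i_{\ast}\circ i_{1\ast},\pi\circ i_{\ast}\circ i_{2\ast},\pi\circ i_{\ast}\circ j_{\ast}$ of the (one-dimensional) Teichm\"{u}ller spaces of the three NEC extensions classified in Theorem \ref{Signaturas NEC}, and gets connectedness from Proposition \ref{Conected}. Your version would still need to check that the fixed locus of $\lambda\mapsto\bar\lambda$ accounts for \emph{all} real surfaces in the family (a fixed point of an anticonformal involution of $\mathcal{F}_{g}$ a priori only gives an anticonformal automorphism, not an involution, and the family could admit a second real structure), and that the three components match bijectively with the three extended actions; these are fillable gaps.

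The genuine error is in your identification of the three added points. A collision of two cone points never produces a smooth limit here: the shrinking curve class on the quotient orbifold is represented by a hyperbolic element of $\Gamma^{\prime}$ (e.g.\ $x_{2}x_{4}$ is not conjugate to a power of a generator, hence has infinite order), its preimages in $X$ are essential simple closed geodesics whose lengths tend to $0$, so \emph{every} puncture coming from a collision yields a nodal surface. In particular the order-$2$/order-$2g$ collision does not make the orbifold ``acquire signature $(0;+;[2,4,4g])$.'' In the paper the Wiman surface enters by a completely different mechanism: it is the interior point of the Teichm\"{u}ller line at which the NEC group of signature $(0;+;[-];\{(2,2,2,2g)\})$ extends with index $2$ to one of signature $(0;+;[-];\{(2,4,4g)\})$ (equivalently, the quotient quadrilateral acquires an extra involution and $\theta_{2}^{\ast}$ extends); the surface then has $8g$ automorphisms, is excluded from $a_{2}$ and $b$, and therefore appears in $\overline{a_{2}}\smallsetminus a_{2}$ and $\overline{b}\smallsetminus b$. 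Moreover, your sketch gives no way to determine the nodal limits themselves: the paper needs the theorem of \cite{CGA} to compute the graphs $\mathcal{G}^{i}(\theta_{1}^{\ast})$ (the $1$- or $2$-dipole $X_{D}$ versus the one-vertex, $g$-loop surface $X_{R}$), to see that the two nodal endpoints are distinct, and to match endpoints across the three arcs so that $\overline{a_{1}\cup a_{2}\cup b}$ closes up into a single Jordan curve rather than some other graph. Without the extension argument for $X_{8g}$ and the \cite{CGA} computation, the trichotomy you flag as ``the main obstacle'' is not resolved, and the resolution you propose assigns the Wiman surface to the wrong degeneration.
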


\begin{proof}
In $\mathcal{F}_{g}$, the surfaces have exactly $4g$ automorphisms, therefore
to complete $\mathcal{F}_{g}$ to $\overline{\mathcal{F}}_{g}$ (the topological
closure of $\mathcal{F}_{g}$ in $\widehat{\mathcal{M}_{g}}$), it is necessary
to add surfaces with more than $4g$ automorphisms and nodal surfaces in
$\widehat{\mathcal{M}_{g}}\smallsetminus\mathcal{M}_{g}$.

The surfaces in $\mathcal{F}_{g}$ having anticonformal automorphisms
correspond to the two inclusions $i_{1}$ and $i_{2}$ of Fuchsian groups with
signature $(0;+;[2,2,2,2g])$ in NEC groups with signature
$(0;+;[-];\{(2,2,2,2g)\})$ and the inclusion $j$ in NEC groups with signature
$(0;+;[2];\{(2,2g)\}),$ see Theorem \ref{Signaturas NEC}. The set of points in
$\mathcal{F}_{g}$ having anticonformal involutions are the following subsets
of $\mathcal{M}_{g}$:
\begin{align*}
\pi\circ i_{\ast}\circ i_{1\ast}(\mathbf{T}_{(0;+;[-];\{(2,2,2,2g)\})})  &
=a_{1}\\
\pi\circ i_{\ast}\circ i_{2\ast}(\mathbf{T}_{(0;+;[-];\{(2,2,2,2g)\})})  &
=a_{2}\\
\pi\circ i_{\ast}\circ j_{\ast}(\mathbf{T}_{(0;+;[2];\{(2,2g)\})})  &  =b.
\end{align*}

Since $\mathbf{T}_{(0;+;[-];\{(2,2,2,2g)\})}$ and $\mathbf{T}
_{(0;+;[2];\{(2,2g)\})}$ are of real dimension $1$, by Proposition
\ref{Conected} the sets $a_{1},a_{2}$ and $b$ are connected $1$-manifolds. Let
$\overline{a}_{1},\overline{a}_{2}$ and $\overline{b}$ be the closures of
$a_{1},a_{2}$ and $b$ in $\widehat{\mathcal{M}}_{g}$. Now we shall describe
the surfaces in $\overline{a_{1}\cup a_{2}\cup b}\smallsetminus(a_{1}\cup
a_{2}\cup b)$.

The arc $a_{1}\,$contains the surfaces with anticonformal involutions of
topological types $\{+2,0,-2,-2\}$ or $\{+1,0,-1,-3\}$, $a_{2}$ the surfaces
with anticonformal involutions of topological types $\{-1,-1,-g,-g\}$, and $b$
the surfaces with anticonformal involutions of topological types
$\{0,0,-2,-2\}$ or $\{-2\}$.

Let $\mathcal{G}$ be the graph of a nodal surface in the clousure of
$\mathcal{F}_{g}$ in $\widehat{\mathcal{M}}_{g}$. By the main theorem of
\cite{CGA} the graph $\mathcal{G}$ has $[D_{2g}:H(\theta\circ\delta)]$
vertices, where $\theta:\Gamma^{\prime}\rightarrow D_{2g}$ is the epimorphism
given in Theorem \ref{Theorem 1}, $\delta$ is an automorphism of the group
$\Gamma^{\prime}$ with signature $(0;+;[2,2,2,2g])$ and $H(\theta\circ
\delta)=\left\langle \theta\circ\delta(x_{1}x_{2}),\theta\circ\delta
(x_{3}),\theta\circ\delta(x_{4})\right\rangle $.

First of all, we shall consider the nodal surfaces in the clousure of the arc
$a_{1}$. Let $X=\mathbb{D}/\Gamma$ be a Riemann surface in the arc $a_{1}$.
Then there is an NEC group $\Gamma^{\ast}$ of signature
$(0;+;[-];\{(2,2,2,2g)\})$ such that $Aut^{\pm}(X)\cong\Gamma^{\ast}/\Gamma$.
The group $Aut^{\pm}(X)$ is isomorphic to:
\[
D_{2g}\times C_{2}=\left\langle w,x:w^{2}=x^{2}=(wx)^{2g}=1\right\rangle
\times\left\langle y:y^{2}=1\right\rangle
\]
and the epimorphism $\theta_{1}^{\ast}:\Gamma^{\ast}\rightarrow\Gamma^{\ast
}/\Gamma\cong D_{2g}\times C_{2}$ is defined in a canonical presentation of
$\Gamma^{\ast}$ by:
\[
\theta_{1}^{\ast}(c_{0})=x,\theta_{1}^{\ast}(c_{1})=y,\theta_{1}^{\ast}%
(c_{2})=(wx)^{g}w,\theta_{1}^{\ast}(c_{3})=w.
\]
The restriction $\theta_{1}^{\ast+}$ of $\theta_{1}^{\ast}$ to $(\Gamma^{\ast
})^{+}$ is
\begin{align*}
\theta_{1}^{\ast}(c_{0}c_{1})  &  =\theta_{1}^{\ast+}(x_{1})=xy,\theta
_{1}^{\ast}(c_{1}c_{2})=\theta_{1}^{\ast+}(x_{2})=y(wx)^{g}w,\\
\theta_{1}^{\ast}(c_{2}c_{3})  &  =\theta_{1}^{\ast+}(x_{3})=(wx)^{g}%
,\theta_{1}^{\ast}(c_{3}c_{0})=\theta_{1}^{\ast+}(x_{3})=wx.
\end{align*}

The nodal surfaces that are limits of the real surfaces in the arc $a_{1}$ are
given by automorphisms $\delta$ of the group $(\Gamma^{\ast})^{+}$ such that
$\theta\circ\delta$ is $\theta_{1}^{\ast+}\circ\gamma$, where $\gamma$ is an
automorphism of the group $\Gamma^{\ast}$. This fact reduces the possible
graphs of the nodal surfaces in $\overline{a_{1}}\cap(\widehat{\mathcal{M}%
}_{g}\smallsetminus\mathcal{M}_{g})$ to two: $\mathcal{G}^{1}(\theta)$ and
$\mathcal{G}^{2}(\theta)$. If $H^{1}(\theta_{1}^{\ast})$ is the subgroup of
$D_{2g}$ generated by $\theta_{1}^{\ast+}(x_{1}x_{2})$, $\theta_{1}^{\ast
+}(x_{3})$, $\theta_{1}^{\ast+}(x_{4})$, the number of vertices of
$\mathcal{G}^{1}(\theta)$ is given by $[D_{2g}:H^{1}(\theta_{1}^{\ast})]$, and
for $\mathcal{G}^{2}(\theta)$ the number of vertices is given by the index of
the subgroup $H^{2}(\theta_{1}^{\ast})$ of $D_{2g}$ generated by $\theta
_{1}^{\ast+}(x_{1})$, $\theta_{1}^{\ast+}(x_{2}x_{3})$, $\theta_{1}^{\ast
+}(x_{4})$. Hence the number of components (vertices of the corresponding
graphs) of such nodal surfaces are, respectively:
\begin{align*}
\lbrack D_{2g} &  :\left\langle \theta_{1}^{\ast}(c_{0}c_{2}),\theta_{1}
^{\ast}(c_{2}c_{3}),\theta_{1}^{\ast}(c_{3}c_{0})\right\rangle ]=\\
\lbrack D_{2g} &  :\left\langle (wx)^{g-1},(wx)^{g},wx\right\rangle
]=[D_{2g}:\left\langle wx\right\rangle ]=2
\end{align*}
and
\begin{align*}
\lbrack D_{2g} &  :\left\langle \theta_{1}^{\ast}(c_{0}c_{1}),\theta_{1}
^{\ast}(c_{1}c_{3}),\theta_{1}^{\ast}(c_{3}c_{0})\right\rangle ]=\\
\lbrack D_{2g} &  :\left\langle xy,yw,wx\right\rangle ]=1.
\end{align*}

By the main theorem of \cite{CGA} the degree of the vertices of the graph
$\mathcal{G}^{1}(\theta_{1}^{\ast})$ is $[H^{1}(\theta_{1}^{\ast
}):\left\langle \theta_{1}^{\ast}(x_{1}x_{2})\right\rangle ]$. Since
$[H^{1}(\theta_{1}^{\ast}):\left\langle \theta_{1}^{\ast}(x_{1}x_{2}
)\right\rangle ]=1$ if $g$ is even and $2$ if $g$ is odd, the vertices of the
graph $\mathcal{G}^{1}(\theta_{1}^{\ast})$ have degree $1$ or $2$, and the
graph has two vertices and one or two edges joining them, the graph
$\mathcal{G}^{1}(\theta_{1}^{\ast})$ is a 1- or 2-dipole.

By \cite{CGA} and since $[H^{2}(\theta_{1}^{\ast}):\left\langle \theta
_{1}^{\ast}(x_{2}x_{3})\right\rangle ]=g$, the graph $\mathcal{G}^{2}
(\theta_{1}^{\ast})$ has one vertex and $g$ loops. We call $X_{D}$ the nodal
surface corresponding to $\mathcal{G}^{1}(\theta_{1}^{\ast})$ and $X_{R}$ the
nodal surface corresponding to $\mathcal{G}^{2}(\theta_{1}^{\ast})$.

Each vertex of $\mathcal{G}^{i}(\theta_{1}^{\ast})$ corresponds to one
component of the nodal surface. The uniformization groups of the components of
$X_{D}$ and $X_{R}$ are $\ker\omega_{1}$ and $\ker\omega_{2}$ respectively,
where the homorphisms $\omega_{i}:\widehat{\Gamma}\rightarrow D_{2g}$, $i=1,2$
are defined by:
\begin{align*}
\omega_{1}  &  :\gamma_{1}\rightarrow\theta_{1}^{\ast}(c_{0}c_{2})=\theta
_{1}^{\ast}(x_{1}x_{2})=x(wx)^{g}w,\\
\gamma_{2}  &  \rightarrow\theta_{1}^{\ast}(c_{2}c_{3})=\theta_{1}^{\ast
}(x_{3})=(wx)^{g},\\
\gamma_{3}  &  \rightarrow\theta_{1}^{\ast}(c_{3}c_{0})=\theta_{1}^{\ast
}(x_{4})=wx.
\end{align*}
from a Fuchsian group $\widehat{\Gamma}$ with signature $(0;+;[\infty,2,2g])$
(one parabolic class of transformations) and presentation $\left\langle
\gamma_{i}:\gamma_{1}\gamma_{2}\gamma_{3}=\gamma_{2}^{2}=\gamma_{2}%
^{2g}\right\rangle $. As a consequence each component of $X_{D}$ has genus
$\frac{g}{2}$ if $g$ is even and $\frac{g-1}{2}$ if $g$ is odd.

Now
\begin{align*}
\omega_{2}  &  :\gamma_{1}\rightarrow\theta_{1}^{\ast}(c_{0}c_{1})=\theta
_{1}^{\ast}(x_{1})=xy,\\
\gamma_{2}  &  \rightarrow\theta_{1}^{\ast}(c_{1}c_{3})=\theta_{1}^{\ast
}(x_{2}x_{3})=yw,\\
\gamma_{3}  &  \rightarrow\theta_{1}^{\ast}(c_{3}c_{0})=\theta_{1}^{\ast
}(x_{4})=wx
\end{align*}
where $\widehat{\Gamma}$ has presentation $\left\langle \gamma_{i}:\gamma
_{1}\gamma_{2}\gamma_{3}=\gamma_{1}^{2}=\gamma_{2}^{2g}\right\rangle $ and
signature $(0;+;[\infty,2,2g])$. The component of $X_{R}$ has genus $0$.

The set $\overline{a}_{1}$ intersects $\widehat{\mathcal{M}}_{g}
\smallsetminus\mathcal{M}_{g}$ in two points to : $X_{D}$ and $X_{R}$, thus
$a_{1}$ is an arc.

Let $X_{8g}$ be the Wiman surface of type II with automorphism group of order
$8g$ (for $g=2$, $\mathrm{Aut}(X_{16})=GL(2,3)$) and such that the signature
of the Fuchsian group $\Delta$ uniformizing $X_{8g}/\mathrm{Aut}^{\pm}
(X_{8g})$ is $(0;+;[-];\{(2,4,4g)\})$ (signature $(0;+;[-];\{(2,3,8)\})$ for
$g=2$). The surface $X_{8g}$ belongs to the clousure of the arc $a_{2}$ since
a group of signature $(0;+;[-];\{(2,2,2,2g)\})$ is contained in $\Delta$ and
the epimorphism $\theta_{2}^{\ast}$ may be extended to $\Delta$.

There is also one point in $\overline{a_{2}}\cap(\widehat{\mathcal{M}_{g}
}\smallsetminus\mathcal{M}_{g})$. The graph of $\ \overline{a_{2}}
\cap(\widehat{\mathcal{M}_{g}}\smallsetminus\mathcal{M}_{g})$ has only one
vertex since:%
\[
\theta_{2}^{\ast}(c_{0})=x,\theta_{2}^{\ast}(c_{1})=y,\theta_{2}^{\ast}
(c_{2})=y(wx)^{g},\theta_{2}^{\ast}(c_{3})=w
\]
\[
\theta_{2}^{\ast}(c_{0}c_{1})=xy,\theta_{2}^{\ast}(c_{1}c_{2})=(wx)^{g}
,\theta_{2}^{\ast}(c_{2}c_{3})=y(wx)^{g}w,\theta_{2}^{\ast}(c_{3}c_{0})=wx
\]
\[
\theta_{2}^{\ast}(c_{0}c_{2})=xy(wx)^{g},\theta_{2}^{\ast}(c_{2}
c_{3})=y(wx)^{g}w,\theta_{2}^{\ast}(c_{3}c_{0})=wx
\]
\[
\theta_{2}^{\ast}(c_{0}c_{1})=xy,\theta_{2}^{\ast}(c_{1}c_{3})=yw,\theta
_{2}^{\ast}(c_{3}c_{0})=wx
\]
and $\left\langle yx(wx)^{g},y(wx)^{g}w,wx\right\rangle =\left\langle
yx,yw,wx\right\rangle \cong D_{2g}$. Hence $X_{R}\in\overline{a_{2}}
\cap(\widehat{\mathcal{M}_{g}}\smallsetminus\mathcal{M}_{g})$. Therefore
$\overline{a}_{2}\smallsetminus a_{2}$ has two points: $X_{R}$ and $X_{8g}$,
thus $a_{2}$ is an arc.

Finally, in a similar way one sees that $b$ joins $X_{D}$ and $X_{8g}$, so $b$
is an arc and $\overline{a_{1}\cup a_{2}\cup b}$ is a closed Jordan curve, the
fixed point set of an anticonformal involution of $\overline{\mathcal{F}}_{g}$.
\end{proof}

\begin{remark}
The surfaces in the arc $a_{2}$ are the surfaces having maximal number of
ovals among the Riemann surfaces of genus $g$ with four non-conjugate
anticonformal involutions, two of which do not commute (see Theorem 1 in
\cite{IG}).
\end{remark}

\begin{remark}
As a consequence of the above theorem we have that $\overline{\mathbb{R}%
\mathcal{F}_{g}}\cap\mathcal{M}_{g}$ has two connected components, then we
cannot always continuously deform a real algebraic curve with $4g$
automorphisms to another real algebraic curve with the same characteristics
mantaining the real character and the number of automorphisms along the path.
\end{remark}

\end{document}